\theoremstyle{plain}
\newtheorem{theorem}{\bf Theorem}[section]
\newtheorem{lemma}[theorem]{\bf Lemma}
\newtheorem{corollary}[theorem]{\bf Corollary}
\newtheorem{proposition}[theorem]{\bf Proposition}
\def\a{\alpha}  \def\cA{{\mathcal A}}       %\def\mA{{\mathscr A}}
\def\d{\delta}         %\def\mE{{\mathscr E}}
\def\k{\kappa}         %\def\mL{{\mathscr L}}
\def\l{\lambda}        %\def\mM{{\mathscr M}}
\def\n{\nu}            %\def\mP{{\mathscr P}}
\def\r{\rho}           %\def\mQ{{\mathscr Q}}
\def\o{\omega}         %\def\mX{{\mathscr X}}
\def\Z{{\mathbb Z}}    
 \def\N{{\mathbb N}}  %\def\S{{\mathbb S}}
\def\no{\noindent}
\newcommand{\mg}{\overline{\mathcal{M}}_g}
\newcommand{\mgnn}{\overline{\mathcal{M}}_{g,2n}}
\newcommand{\mgn}{\overline{\mathcal{M}}_{g,n}}
\newcommand{\mgg}{\overline{\mathcal{M}}_{g,g}}
\newcommand{\mgm}{\overline{\mathcal{M}}_{g,m}}
\newcommand{\ngn}{\overline{\mathcal{N}}_{g,n}}
\newcommand{\M}{\overline{\mathcal{M}}}
\newcommand{\mG}{\overline{\mathcal{M}}^G}
\newcommand{\mH}{\overline{\mathcal{M}}^H}
\newcommand{\mGgn}{\overline{\mathcal{M}}^G_{g,n}}
\newcommand{\mS}{\overline{\mathcal{M}}^{S_n}}
\newcommand{\tmG}{\tilde{\mathcal{M}}^G}
\newcommand{\dirr}{\d_{\mathrm{irr}}} 
\def\Pic{\mathop{\mathrm{Pic }}}
\def\Sym{\mbox{Sym}}
\let\geq\geqslant
\let\leq\leqslant
\def\eq{\begin{equation}}
\def\qe{\end{equation}}
\newcommand\blfootnote[1]{%
  \begingroup
  \renewcommand\thefootnote{}\footnote{#1}%
  \addtocounter{footnote}{-1}%
  \endgroup
}
\begin{document}

\title{On quotients of $\mgn$ by certain subgroups of $S_n$}
%{Promotionsvorhaben}

%\date{10.05.17}
\author{Irene Schwarz}
\address{Humboldt Universit\"at Berlin, Institut f\"ur Mathematik, 
Rudower Chausee 25, 12489 Berlin, Germany}
\email{schwarzi@math.hu-berlin.de}
%%%%%%%%%%%%%%%%%%%%%%%%%%%%%%%%%%%%%%%%%%%%%%%%%%%%%%%%%%%%%%%%%%%%%%%%%%%%%%%%
\begin{abstract}
\no We show that certain quotients of the compactified moduli space  of 
$n-$ pointed genus $g$ curves, $\mG:= \mgn / G$, are of general type, for a fairly broad class of subgroups $G$ of the symmetric group  $S_n$ which act by permuting the $n$ marked points. The values of $(g,n)$ which we specify in our theorems are near optimal in the sense that, at least in he cases that G is the full symmetric group $S_n$ or a product $S_{n_1}\times \ldots \times S_{n_m}$, there is a relatively narrow transitional zone in which $\mG$ changes its behaviour from being of general type to its opposite, e.g. being uniruled or even unirational. 
As an application we consider the universal difference variety $\mgnn /S_n \times S_n$.
%This is in contradistinction to the case of $\ngn:= \mgnn /G$, with $G:=(\Z_2)^n\rtimes S_n$, the moduli space of $n-$nodal genus $g$ curves analyzed in \cite{s2}.

%the compactification of the moduli space of {\em $n-$nodal curves of genus g}, i.e. $\ngn:= \mgnn /G$, with $G:=(\Z_2)^n\rtimes S_n$,  is of general type for $g \geq 24$, for all $n \in \N$. While this is a fairly easy result,  it requires completely different techniques to extend it to low genus $5 \leq g \leq 23$. Here we need
%that the number of nodes  varies in a band $n_{\mathrm{min}}(g) \leq n \leq  n_{\mathrm{max}}(g)$, where$n_{\mathrm{max}}(g)$ is the largest integer smaller than (or in some cases equal to) $\frac{7}{2}(g-1)-3$.
%The lower bound  $n_{\mathrm{min}}(g) $ is close to the bound found in\cite{l1}, \cite{f} for $\mgnn$ to be of general type (in many cases it is identical). This  will be tabled in Theorem  \ref{two} which is the main result of this paper.

% and $n_{\mathrm{min}}(g) \leq n \leq  n_{\mathrm{max}}(g)$. Here $n_{\mathrm{max}}$ is the largest integer smaller than (or in some cases equal to) $\frac{7}{2}(g-1)-3$. The lower bound is  $n_{\mathrm{min}}(g)=0$ for $g \geq  23$, but for $7 \leq g \leq 22$  our methods require a certain non-zero minimal number of nodes.
  
\end{abstract}

%\thanks{This paper is part of the authors PhD written under the supervision of G. Farkas at Humboldt Universit\"at Berlin, Institut f\"ur Mathematik.  I.S. thanks her advisor for suggesting this topic and efficient support. She also acknowledges some help from her family in putting this document into acceptable English.}
\thanks{This paper is part of my PhD written under the supervision of G. Farkas at Humboldt Universit\"at Berlin, Institut f\"ur Mathematik.  I wish to thank my advisor for his efficient support. %for suggesting this topic and for his efficient support. I also thank my family for their help in putting this document into acceptable English, and 
Furthermore, I acknowledge helpful comments of Will Sevin on an earlier version of \cite{s2} which also led to improvements in this paper.
}

\blfootnote{2000 Mathematics Subject Classification. 14H10, 14H51, %14D22, 
14E99}
\blfootnote{Key words and phrases. Kodaira dimension, %nodal curves,
moduli space, effective divisors}

\maketitle
%\tableofcontents

\setcounter{page}{1}

\section{introduction}

In this paper we shall consider a class of quotients of $\mgn$, the compactified moduli
space of $n$-pointed genus g complex curves, by certain subgroups of the symmetric group $S_n$ which act %on the $n-$pointed curve 
by permuting the marked points. Our aim is to analyse under which conditions such quotients are of general type or, in a complementary case, uniruled or even unirational. As usual, we do this by using the Kodaira dimension.

We were led to considering the questions adressed in the present paper
by analyzing  an analogous problem for the compactified moduli space $\ngn$
of $n-$ nodal genus $g$ curves in \cite{s2}. Here $\ngn = \mgnn/G$ where the special group $G$ is also a subgroup of $S_{2n}$, namely the semidirect product
$G:=(\Z_2)^n\rtimes S_n$. In view of the great importance of $n-$nodal curves,
e.g. in the deformation type arguments used in the proof of the Brill-Noether theorem, %(see \cite{bn, gh, g, eh0, eh1, s1}), 
this problem was directly motivated by geometry.% see e.g. \cite{s1}.
 
Our  proof in \cite{s2}, however, let us realize that  there are some related results for {\em general} quotients of $\mgn$ which in some aspect are {\em different} from the special case of $\ngn$. In particular, it is important  that $G:=(\Z_2)^n\rtimes S_n$ is a semidirect product and
{\em not} a product of subgroups. The main point of this paper is to prove first results in this direction for a class of general quotients.

For  $G \subset S_n$,  we denote the quotient by this action as $ \mgn^G := \mgn / G$ and we suppress the subscript $(g,n)$ if we feel it unnecessary  within  a given context.  Then the natural quotients induce the chain of morphisms of schemes
\begin{equation}  \label{quotient}
\mgn \to \mgn^G  \to \mgn^{S_n}  
\end{equation} 
which by standard arguments (weak additivity of the Kodaira dimension for base and fibre) gives the following ordering for the Kodaira dimension
that 
\begin{equation}
\k(\mgn) \geq \k(\mgn^G)  \geq \k(\mgn^{S_n}).
\end{equation}
 Since all algebraic varieties in \eqref{quotient} have the same dimension, one gets 
% denoting the property of being of general type simply by ogt:
 
 \begin{equation}
 \mgn \quad \mbox{of general type    } \Rightarrow \mgn^G \quad \mbox{of g. t.   }\Rightarrow \mgn^{S_n} \quad \mbox{ of g.t.}
 \end{equation}
By the same argument, one gets
\begin{lemma}  \label{sub}
For any subgroup $H$ of $G$ one has
\begin{equation}
\mG \quad \mbox{of general type    } \Rightarrow \mH \quad \mbox{of general type    }
\end{equation}
\end{lemma}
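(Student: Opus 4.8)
The plan is to reproduce the argument already used to obtain the chain $\k(\mgn)\ge\k(\mG)\ge\k(\mS)$ around \eqref{quotient}, now for the single additional quotient determined by the inclusion $H\subset G$. First I would note that, since $H$ is a subgroup of $G$, each $G$-orbit of marked points decomposes into $H$-orbits, so the quotient map out of $\mgn$ factors canonically as
\[
\mgn \longrightarrow \mH \stackrel{p}{\longrightarrow} \mG ,
\]
where $p$ is the finite surjective morphism passing from the finer $H$-quotient to the coarser $G$-quotient, of degree $[G:H]$. In particular all three spaces share the common dimension $d=3g-3+n$.

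Next I would apply the same weak-additivity principle as before. Here $p$ is a dominant, generically finite morphism between varieties of equal dimension, so its generic fibre is zero-dimensional and contributes nothing; weak additivity of the Kodaira dimension for base and fibre therefore gives $\k(\mH)\ge\k(\mG)$. Equivalently, and more concretely, writing $K_{\mH}=p^{*}K_{\mG}+R$ with ramification divisor $R\ge 0$, one obtains for every $m\ge 0$ an injection $H^{0}(\mG,mK_{\mG})\hookrightarrow H^{0}(\mH,mK_{\mH})$, and hence the asserted inequality of Iitaka dimensions.

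The conclusion is then immediate. If $\mG$ is of general type we have $\k(\mG)=d$, so $\k(\mH)\ge d$; since always $\k(\mH)\le\dim\mH=d$, this forces $\k(\mH)=d$, that is, $\mH$ is of general type.

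The one step requiring care — the main, if routine, obstacle — is that $\mH$ and $\mG$ are not smooth but carry finite quotient singularities, so the adjunction formula $K_{\mH}=p^{*}K_{\mG}+R$ and the extension of pluricanonical forms must be read on suitable resolutions. This is exactly the technical input underlying \eqref{quotient}: in the relevant range of $(g,n)$ these quotient singularities are canonical (the Reid--Tai criterion is satisfied), so every pluricanonical form extends across the singular locus and the pullback argument remains valid. Granting this, the lemma follows verbatim from the reasoning already applied to the chain \eqref{quotient}.
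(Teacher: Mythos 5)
Your proposal is correct and follows essentially the same route as the paper: the paper's proof is literally ``by the same argument,'' meaning weak additivity of the Kodaira dimension applied to the factorization $\mgn \to \mH \to \mG$ induced by $H \subset G$, which is exactly what you do. Your additional remarks (the explicit pluricanonical injection via $K_{\mH} = p^*K_{\mG} + R$ with $R \ge 0$, and the need for canonical singularities, which the paper supplies in Theorem \ref{noadcon}) only make the argument more complete.
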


We shall need the ramification divisor $R$ (see equation \eqref{R} below) of the quotient map $ \pi: \mgn \to \mG$. Denoting by $(i \  j)$ the transposition in $S_n$ interchanging $i$ and $j$ and checking where sheets will come together,  one readily finds
\begin{equation}
R = \sum_{(i \  j) \in G} \d_{0, \{i,j\}}
\end{equation}
where the (standard) definition of the boundary divisor $\d_{0, \{i,j\}}$ is given in Section \ref{prem} below, which in particular introduces all divisors needed in this paper.
Then the well known explicit formula for the canonical divisor $K_{\mgn}$ gives

\begin{corollary} \label{corr}
The pullback $K_G:= \pi^*(K_{\mG})$ to $\mgn$ is given by
\begin{equation}    \label{kg}
K_G = K_{\mgn} - R = 13 \l + \psi - 2 \d - \sum_{(i \ j) \in G} \d_{0, \{i,j\}}
\end{equation}
\end{corollary}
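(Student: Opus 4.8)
The plan is to read off the formula directly from the Riemann--Hurwitz relation for the finite quotient map $\pi\colon\mgn\to\mG$, combined with the known expression for the canonical class of $\mgn$. First I would invoke the fact that, for a finite surjective morphism $\pi\colon X\to Y$ of normal varieties in characteristic zero, one has
\begin{equation*}
K_X = \pi^*K_Y + R,
\end{equation*}
where $R=\sum_D(e_D-1)D$ is the ramification divisor, the sum running over prime divisors $D\subset X$ with ramification index $e_D$ along $D$. Rearranging gives $\pi^*K_Y = K_X - R$, so that with $X=\mgn$ and $Y=\mG$ one obtains $K_G = \pi^*(K_{\mG}) = K_{\mgn}-R$, which is exactly the first equality in \eqref{kg}.

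The second equality is then pure substitution. The well-known formula for the canonical class of the compactified moduli space reads $K_{\mgn}=13\lambda+\psi-2\delta$, and the ramification divisor has already been identified above as $R=\sum_{(i\ j)\in G}\delta_{0,\{i,j\}}$. Feeding both into $K_G=K_{\mgn}-R$ immediately yields $13\lambda+\psi-2\delta-\sum_{(i\ j)\in G}\delta_{0,\{i,j\}}$, as claimed, so there is no genuine computation to carry out.

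The one point that really requires care --- and hence the conceptual heart of the argument --- is that $\mG$ is not smooth: as a finite quotient of $\mgn$ it has at worst quotient singularities. I would therefore note that these are $\mathbb{Q}$-factorial (in fact canonical), so that $K_{\mG}$ is a well-defined $\mathbb{Q}$-Cartier divisor, the pullback $\pi^*$ makes sense, and the Hurwitz relation above is valid as an identity of $\mathbb{Q}$-divisors on $\mgn$. The accompanying ramification-index computation --- that each transposition $(i\ j)\in G$ acts as a reflection fixing $\delta_{0,\{i,j\}}$ generically, so that $e_D=2$ and the coefficient of $\delta_{0,\{i,j\}}$ in $R$ equals $2-1=1$ --- is precisely the ``checking where sheets come together'' already performed in the excerpt, so no further verification is needed on that side.
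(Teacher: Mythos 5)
Your proof is correct and follows essentially the same route as the paper: apply the Riemann--Hurwitz formula to the finite quotient map $\pi$, identify the ramification divisor $R=\sum_{(i\ j)\in G}\d_{0,\{i,j\}}$ via the transpositions fixing the generic point of $\d_{0,\{i,j\}}$, and substitute the standard formula $K_{\mgn}=13\l+\psi-2\d$. Your added remark that the quotient singularities make $K_{\mG}$ a well-defined $\Q$-Cartier class is a reasonable supplement to what the paper leaves implicit (it works with rational divisor classes throughout and establishes canonical singularities in Theorem \ref{noadcon}).
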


For the sake of the reader, we shall now recall   

\begin{proposition} \label{proposit}
The moduli space $\mgn$ is of general type for $g\geq 23$ or for $n\geq n_{\mathrm{min}}(g)$ given in the following table:
\begin{table}[h!]
$$\begin{array}{c|c|c|c|c|c|c|c|  c|c|c|c|c|  c|c|c|c|c|  c|c}
g &4&5&6&7&8&9&10&11&12&13&14&15&16&17&18&19&20&21&22\\
\hline
n_{\mathrm{min}}&16&15&16&15&14&13&11&12&11&11&10&10&9&9&9&7&6&4&4
\end{array}$$
\caption{ } \label{table mg}
\end{table}
\end{proposition}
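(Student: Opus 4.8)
The plan is to reduce the statement to the bigness of the canonical class $K_{\mgn}=13\lambda+\psi-2\delta$ recorded in \eqref{kg}, and then to realise this class as an explicit nonnegative combination of effective and nef divisors. The first step is the passage from the Kodaira dimension $\kappa(\mgn)$ to the canonical class itself: for $g\geq 4$ every pluricanonical form on the smooth locus of the coarse space $\mgn$ extends to a resolution of singularities, so that $\kappa(\mgn)$ coincides with the Iitaka dimension $\kappa(\mgn,K_{\mgn})$ of the canonical class. This is the extension theorem of Harris--Mumford for $\mg$, in the form extended to marked points by Logan; the hypothesis $g\geq 4$ is precisely what rules out the loci of curves with extra automorphisms that could produce non-canonical singularities, which is why the table begins at $g=4$. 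Granting this, it suffices to exhibit $K_{\mgn}$ as a big divisor.

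For the range $g\geq 23$ I would argue by pullback along the forgetful morphism $p\colon\mgn\to\mg$. Here $\mg$ is itself of general type (by the classical results of Harris--Mumford and Eisenbud--Harris for $g\geq 24$, together with the recent work of Farkas--Jensen--Payne for $g=23$), so $K_{\mg}$ is big; and the relative dualizing sheaf of $p$ restricts on a general fibre to $\omega_C(p_1+\dots+p_n)$, which has positive degree for $g\geq 2$ and is therefore relatively big. The weak additivity of the Kodaira dimension for base and fibre, already invoked for \eqref{quotient} in the introduction, then yields that $\mgn$ is of general type for every $n\geq 0$, giving the first clause of the statement.

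The substance of the table in the range $4\leq g\leq 22$ lies in the effective-divisor method. For each such $g$ one selects an effective divisor $\overline{D}$ of Brill--Noether or Koszul type, with class
\[
[\overline{D}] = a\lambda + \sum_{i=1}^n b_i\,\psi_i - c_{0}\,\delta_{\mathrm{irr}} - \sum_{h,S} c_{h,S}\,\delta_{h,S},
\]
taken either as the pullback from $\mg$ of a divisor of slope less than the canonical slope $13/2$ (furnished by the Brill--Noether class when $g$ is large and by Farkas' syzygy divisors in the more delicate range), or as a genuinely pointed Brill--Noether divisor of the type introduced by Logan involving the marked points. One then solves for nonnegative constants so that
\[
13\lambda+\psi-2\delta \;=\; t\,[\overline{D}] \;+\; \sum_{i=1}^n s_i\,\psi_i \;+\; (\text{a nef combination of }\lambda\text{ and boundary-free classes}),
\]
using that each $\psi_i$ is nef on $\mgn$. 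The marked points enter only through the $\psi_i$, and this is exactly why enlarging $n$ helps: more nef $\psi$-summands become available to absorb the positive $\lambda$-contribution while keeping every boundary coefficient nonnegative. The threshold $n_{\mathrm{min}}(g)$ is then the least $n$ for which the resulting system of inequalities --- comparing the slope of the best available divisor against $13/2$ and balancing the coefficient of each boundary stratum --- admits a nonnegative solution.

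The main obstacle is precisely this last, case-by-case input. For every $g$ in Table \ref{table mg} one must (i) produce an effective divisor of small enough slope, which for the upper part of the range forces the use of the finest known effective classes rather than the classical Brill--Noether divisor, and (ii) check that after substitution every boundary coefficient, including those of the $\delta_{h,S}$ carrying the marked points, stays nonnegative. Controlling these boundary coefficients, rather than the interior slope comparison, is the delicate point, because the pointed Brill--Noether classes have intricate boundary behaviour; it is the optimisation over these constraints that produces the precise values of $n_{\mathrm{min}}(g)$ recorded in the table.
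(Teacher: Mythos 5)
The paper does not actually prove this proposition: it is stated explicitly as a recollection, justified by the single sentence ``This collects results from \cite{l1}, \cite{f}, \cite{fv4} and covers all cases known up to now.'' So the only meaningful comparison is between your sketch and the method of those references, and at that level your outline is faithful: one first shows (Harris--Mumford for $\mg$, extended to $\mgn$ by Logan and to the quotients by Farkas--Verra) that pluricanonical forms extend over a resolution, so that general type reduces to bigness of $K_{\mgn}=13\lambda+\psi-2\delta$, and one then exhibits this class as a positive multiple of an effective divisor of small slope plus a big and nef combination involving the $\psi_i$. Your explanation of the role of $n$ (more nef $\psi$-summands to absorb positivity while keeping boundary coefficients nonnegative) is also the correct mechanism.

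Nevertheless, as a proof of the proposition \emph{as stated} your proposal has a genuine gap: the entire content of the statement is the table of thresholds $n_{\mathrm{min}}(g)$, and your argument never derives a single entry of it. You reduce the table to ``the least $n$ for which the resulting system of inequalities admits a nonnegative solution,'' but you neither write down the class of any pointed Brill--Noether or Koszul divisor with its boundary coefficients (which you yourself identify as the delicate part) nor carry out the optimisation for even one value of $g$. That case-by-case computation is exactly what \cite{l1}, \cite{f} and \cite{fv4} supply and what the paper is citing in lieu of a proof. A secondary point: for the clause ``$g\geq 23$'' you invoke Farkas--Jensen--Payne for $\M_{23}$, which is not among the paper's sources; within the literature the paper actually cites, general type of $\M_{23,n}$ for \emph{all} $n$ (in particular $n=0$) does not follow, so either the proposition tacitly relies on that more recent result or the clause must be read more restrictively. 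Your weak-additivity argument for propagating general type from $\mg$ to $\mgn$ is fine and is the same device the paper uses elsewhere, e.g.\ in part (i) of Theorem \ref{two}.
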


This collects results from \cite{l1, f, fv4}  and covers all  cases known up to now.
As a corollary, one then obtains 
\begin{theorem} \label{gen}
In all cases of Proposition \ref{proposit}, if $G$ does not contain any transposition, then $\mG$ also is of general type.
\end{theorem}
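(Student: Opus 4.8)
The plan is to read off the hypothesis directly from the ramification divisor. The sum defining $R=\sum_{(i\,j)\in G}\d_{0,\{i,j\}}$ ranges over the transpositions contained in $G$, so if $G$ contains no transposition it is empty and $R=0$; in other words the quotient map $\pi\colon\mgn\to\mG$ is unramified in codimension one. Substituting $R=0$ into \eqref{kg} of Corollary \ref{corr} yields the crucial identity
\[
\pi^{*}K_{\mG}=K_{G}=K_{\mgn}.
\]
In each case of Proposition \ref{proposit} the space $\mgn$ is of general type, i.e.\ $K_{\mgn}$ is big; the task is then to transport this bigness down through the finite map $\pi$ to $K_{\mG}$.

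First I would descend bigness by a volume computation. Since $\pi$ is finite and surjective of degree $|G|$, volumes multiply under pullback, $\mathrm{vol}(\pi^{*}K_{\mG})=|G|\cdot\mathrm{vol}(K_{\mG})$. Combined with the identity above this gives $\mathrm{vol}(K_{\mG})=\tfrac{1}{|G|}\,\mathrm{vol}(K_{\mgn})>0$, so $K_{\mG}$ is big as a $\Q$-Cartier divisor on $\mG$ (it is $\Q$-Cartier because $\mG$ carries only finite quotient singularities). Equivalently, one may invoke Iitaka's theorem on finite pullbacks to get the chain of equalities $\k(\mG,K_{\mG})=\k(\mgn,\pi^{*}K_{\mG})=\k(\mgn,K_{\mgn})=\dim\mG$ in one stroke. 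Note that the direction of the argument is essential here: without the identity $R=0$ one would only recover the inequality $\k(\mgn)\ge\k(\mG)$ of the introduction, which is too weak to conclude.

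The genuinely delicate point is the passage from bigness of $K_{\mG}$ to $\mG$ being of general type, that is, the extension of pluricanonical forms across a resolution of $\mG$. The no-transposition hypothesis enters a second time exactly here: it forbids pseudo-reflections of the $G$-action in codimension one, so that $\pi$ is unramified in codimension one and the singularities of $\mG$ differ from those of $\mgn$ only along loci of codimension at least two. One then has to verify, by a Reid--Tai type analysis together with the extension results already available for $\mgn$ throughout the range of Proposition \ref{proposit} (which underlie the canonical formula $K_{\mgn}=13\l+\psi-2\d$ of Corollary \ref{corr}), that these remaining quotient singularities are canonical, so that every $G$-invariant section of $mK_{\mgn}$ descends and extends to a pluricanonical form on a desingularization of $\mG$ and $P_{m}(\mG)$ grows like $m^{\dim\mG}$. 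I expect this extension across the higher-codimension quotient singularities to be the main obstacle; once it is secured, bigness of $K_{\mG}$ gives $\k(\mG)=\dim\mG$ and the theorem follows.
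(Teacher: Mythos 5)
Your argument is correct, but it descends general type from $\mgn$ to $\mG$ by a different mechanism than the paper. Both proofs begin identically: the absence of transpositions in $G$ forces $R=0$, hence $K_G=\pi^*K_{\mG}=K_{\mgn}$. The paper then uses the characterization ``of general type iff the canonical class is the sum of an ample and an effective divisor'' and observes that in every case of Table~\ref{table mg} the divisors used in \cite{l1,f,fv4} to exhibit such a decomposition of $K_{\mgn}$ are $S_n$-invariant, hence $G$-invariant, hence descend to $\mG$; the whole weight of that proof rests on this inspection of the literature. You instead descend bigness abstractly: since $\pi$ is finite and surjective, $\mathrm{vol}(K_{\mG})=\frac{1}{|G|}\,\mathrm{vol}(K_{\mgn})>0$ (equivalently, Iitaka's theorem for finite pullbacks gives $\kappa(\mG,K_{\mG})=\kappa(\mgn,K_{\mgn})$), and then you invoke canonical singularities of $\mG$ to convert bigness of $K_{\mG}$ into general type. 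That last step is exactly the paper's Theorem \ref{noadcon}, which is proved (following \cite{fv1,hm}) for \emph{every} subgroup $G\subset S_n$ --- so, contrary to what you suggest, the no-transposition hypothesis does not enter a second time there; it is used only once, to kill $R$. Your route has the advantage of being independent of \emph{how} general type of $\mgn$ was established (no need to verify invariance of the divisors used in the sources), at the cost of invoking the volume/Iitaka machinery for finite morphisms; the paper's route is more elementary but hinges on the $S_n$-invariance observation. Both are complete modulo Theorem \ref{noadcon}.
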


 \begin{proof}
By Corollary \ref{corr}, the divisor classes $K_G$ and $K_{\mgn}$ coincide. The moduli space $\mgn$ is of general type if and only if  $K_{\mgn}$ is the sum of an ample and an effective divisor and, similarly, 
$ \mgn^G $ is of general type if and only if $K_G$ is the sum of an ample and an effective divisor which are both also $G-$ invariant. But in all cases of Table 1 only
$S_n-$ invariant divisors have been used to show that $\mgn$ is of general type, see
\cite{l1, f, fv4}. This proves our claim.
 \end{proof}
 
In particular, this covers the case where $G$ is cyclic (and different from $\Z_2$) or the  cardinality $|G|$ is odd. It 
also covers the largest non-trivial subgroup of $S_n$, the alternating group $A_n=\mbox{kernel} \mbox{  signum} $ (in fact G contains no transpositions, if and only if it is a subgroup of $A_n$).  This has an obvious geometric interpretation: The set of $n- $ tuples of $n$ fixed different points on a genus g curve carries 
a notion of {\em orientation}: Two $n-$tuples have the same orientation, if they are mapped one to another by an even permutation. Taking the quotient by $S_n$ corresponds to passing from $n-$pointed curves to $n-$marked curves, while taking the quotient by $\cA_n$ means passing to curves marked in $n$ points with orientation. Under the first action the property of being of general type might change, but it is invariant under the second.

We remark that it is very natural to use $S_n-$ invariant divisors. We expect that it is possible to do so in {\em any} case in which $\mgn$ is of general type. 
Thus we conjecture:  If $\mgn$ is of general type, then $\mG$ is of general type for any subgroup  $G$ of $A_n$.
% that the above restriction in Theorem \ref{gen}can in fact be deleted  and one has equivalence between $\mG$ and $\mgn$ being of general type, for any subgroup $G$ of $S_n$ without transpositions.

If the subgroup $G$ does contain transpositions, 
%we are not aware of any general argument giving a result similar to  Theorem \ref{gen}. 
the situation is more complicated. For large $g$, the following theorem contains an (easy) general result, while for small $g$   we shall need explicit computations with well chosen divisors.   %, similar to arguments in \cite{s2}. 
 %shall prove
Collecting  from \cite{fv2}  and supplementing this by an analog proof in the cases $g=22,23$
(using the divisors of \cite{fv2})  we obtain

\begin{proposition} \label{one}
The space $\mS$ (and thus, by Lemma \ref{sub}, the space $\mG$ for any subgroup $G$ of $S_n$) is of general type if
\begin{enumerate}
\item[(i)]  $g \geq 24, \quad n<g, \quad$     or
\item[(ii)]  $13 \leq g \leq 23,$   and   $ n_{\mathrm{min}}(g) \leq n \leq g-1,$ where $n_{\mathrm{min}}(g)$ is given in the following table
\end{enumerate}

\begin{table}[h!]
$$\begin{array}{c|c|c|c|c|  c|c|c|c|c|  c|c|c}
g &12&13&14&15&16&17&18&19&20&21&22&23\\
\hline
n_{\mathrm{min}}&10&11&10&10&9&9&10&7&6&4&7&1
\end{array}$$
\caption{ } \label{table mS}
\end{table}
Furthermore, % $\mS$ (and thus any $\mG$ for any subgroup $G$ of $S_n$) still has non-negative Kodaira dimension for $n_{\mathrm{min}}(g) \leq n \leq g$.
the domain of values $(g,n)$ for which $\mS$ is of general type is {\em near optimal} since it is  known that
%, see e.g. \cite{f},  that  $\mS$ is
\begin{enumerate}
\item uniruled, if $n>g$ (for any $g$) or $g \in \{10,11\}$ with $n \neq g$
\item  unirational, if $ g<10, n \leq g$
\item  for $g \geq 12$ the Kodaira dimension $\k(\mgg)=3g -3$ is intermediary.
\end{enumerate}
\end{proposition}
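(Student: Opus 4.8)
The plan is to prove that the canonical class $K_{S_n}=\pi^*(K_{\mS})$ is big, which (once extension of pluricanonical forms is settled) is equivalent to $\mS$ being of general type. By Corollary \ref{corr},
\[
K_{S_n}=13\l+\p-2\d-\sum_{i<j}\d_{0,\{i,j\}},
\]
the ramification term now running over every transposition of $S_n$. Thus the entire task is to write $K_{S_n}$ as the sum of an ample class and an effective $S_n$-invariant divisor, the decisive difficulty being that the subtracted sum $\sum_{i<j}\d_{0,\{i,j\}}$ must be absorbed even though its total weight grows with $n$.

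For part (i) and for those entries of the table in part (ii) already contained in \cite{fv2}, I would collect the decompositions of loc. cit. directly. There one expresses $K_{S_n}$ as a positive combination of (a) a big class pulled back from $\mg$, which is of general type for the relevant $g$ by Proposition \ref{proposit} and \cite{f}, (b) the nef tautological classes $\l$ and $\p$, and (c) an explicit effective, $S_n$-invariant divisor $\gD$ of the kind constructed in \cite{fv2}, whose $\d_{0,\{i,j\}}$-coefficients are negative enough to cancel the ramification while the base class supplies positivity transverse to the fibres of $\mgn\to\mg$. The $S_n$-invariance is forced by the intrinsic, unordered definition of $\gD$; this is exactly the feature exploited in Theorem \ref{gen}, and it is what lets the decomposition descend to $\mS$.

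The new content is the two genera $g=22,23$. Here I would carry out the analogue of the \cite{fv2} construction: specialise the relevant divisor to these genera, compute its class in $\l,\p,\dirr$ and the $\d_{0,\{i,j\}}$, and then verify the numerical inequalities making $K_{S_n}-c\,\gD$ a positive combination of nef classes and effective boundary. The margin is governed by the slope-type positivity of $\overline{\mathcal{M}}_{22}$ and $\overline{\mathcal{M}}_{23}$ and by the $\p$-coefficient of $\gD$; the hypothesis $n\leq g-1$ is precisely what keeps these inequalities solvable, and $n_{\mathrm{min}}(g)$ records the least $n$ for which they hold.

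I expect the main obstacle to be twofold. First, the numerical check is tightest at the upper end $n=g-1$, where the accumulated weight of $\sum_{i<j}\d_{0,\{i,j\}}$ is largest and must be balanced against both the $\d_{0,\{i,j\}}$-coefficient of $\gD$ and the narrow positivity reserve of $K_{\overline{\mathcal{M}}_{22}}$ and $K_{\overline{\mathcal{M}}_{23}}$. Second, one must check that invariant pluricanonical forms extend to a resolution of $\mS$: beyond the boundary-extension argument for $\mgn$ (available for $g\geq 4$), the quotient by $S_n$ creates finite quotient singularities away from $R$, and I would apply the Reid--Tai criterion to confirm that no element of $S_n$ contributes a junior class, so that these singularities are canonical and impose no further conditions. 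The near-optimality assertions (1)--(3) need no new proof: they are the quoted uniruledness, unirationality and intermediate-Kodaira-dimension results delimiting the complementary region.
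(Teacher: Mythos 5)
Your handling of part (ii) and of the near-optimality statements matches the paper's proof, which is itself essentially a citation: for the genera already covered, the table is collected from the explicit computations of \cite{fv2}; for $g=22,23$ the paper merely asserts (without writing it out) the analogous computation ``using the divisors of \cite{fv2}''; the extension of pluricanonical forms is settled once and for all by Theorem \ref{noadcon} via the Reid--Tai criterion, exactly as you indicate; and assertions (1)--(3) are quoted from \cite{fv1} together with the Riemann--Roch uniruledness of $\mgn/S_n$ for $n>g$. On these points your proposal is at the same level of detail as the paper.

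The genuine divergence, and the one gap, is part (i). You propose to obtain the range $g\geq 24$, $n<g$ by ``collecting the decompositions of loc.\ cit.\ directly'', but \cite{fv2} does not supply decompositions of $\pi^*K_{\mS}$ for arbitrary $g\geq 24$ and all $n\leq g-1$; its computations are tailored to the low-genus range recorded in Table 2. Running the divisor argument uniformly over all of (i) is not automatic: the Weierstrass-type classes have $\d_{0,2}$-coefficient strictly greater than $3$ only for $n\leq g-2$, so at the extreme $n=g-1$ one would need the $T_g$-type classes and a fresh numerical check for every $g$ --- precisely the tightness you yourself flag. The paper sidesteps all of this with a soft argument: for $g\geq 24$ the base $\mg$ is of general type, the general fibre of $\mS\to\mg$ is $\Sym^n C$, which for $n\leq g-1$ is birational to $W_n(C)\subset J(C)$ and hence of general type for a general (simple-Jacobian) curve, and additivity of the Kodaira dimension over a base of general type concludes. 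A secondary inaccuracy: a big class pulled back from $\mg$ is never big on $\mgn$ and contributes nothing transverse to the fibres of $\mgn\to\mg$; in the decompositions of \cite{fv2} the fibre-direction positivity comes from $\psi$, which descends to a big and nef class on $\mgn/S_n$ (Proposition \ref{psi}), not from the base.
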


%In the situation of Theorem \ref{one}, 
Here the result of (i) follows from weak additivity of the Kodaira dimension, while (ii) is proven in \cite{fv2} by explicit computation. The first assertion in (1) follows from Riemann-Roch, while the second is proved in \cite{fv1} which also contains (2) and (3). We briefly recall the Riemann-Roch argument:
%In fact, to see this for large $n$, one o
Observe that the fibre of $\mgn/S_{n} \to \mg$ over a smooth curve $[C]\in \mg$ is birational to the symmetric product $C_{n}:=\Sym^n(C):= C^n/S_n$. This can be interpreted as the space of effective divisors of degree $n$ on the curve $C$. Since the Riemann-Roch theorem implies that any effective divisor of degree $d> g$ lies in some $\mathfrak{g}^1_d$, the quotient  $\mgn/S_{n}$ is trivially uniruled for $n> g$. 

Thus, outside the domain of values specified in Proposition \ref{one}, there is just a narrow transitional band in which $\mS$ changes from general type to its opposite.

It is, however, left open by Proposition \ref{one} if $\mG$ might be of general type for some values of $n>g$ if the subgroup $G$ of $S^n$ is chosen judiciously. In the following theorem we shall display a large class of groups for which this is the case.
%shows that in fact the possible values of $n$ are unbounded from above, for any $g> 4$, provided $G$ is chosen in a special way.

\begin{theorem} \label{two}
Fix a partition $n=n_1 + \ldots + n_m$ and let $G=S_{n_1} \times \ldots \times S_{n_{m}}$.  Then $\mGgn$ (and thus, by Lemma \ref{sub}, $\mgn^H$ for any subgroup $H$ of $G$) is of general type if 
\begin{enumerate}
\item[(i)] $g \geq 24, \quad \max \{n_1, \ldots n_m \} \leq g-1$ or
\item[(ii)] $g \leq 23, \quad  \max \{n_1, \ldots n_m \} \leq g-2$ and
$f_m(g;n_1, \ldots n_m) \leq 13$, where $f_m$ is the function defined in equation \eqref{fm} of Section \ref{2} below.
\item[(iii)]  $g \leq 23, \quad  \max \{n_1, \ldots n_m \} \leq g-1$ and
$f_m(g;n_1, \ldots n_m, L_1, \ldots ,L_m) \leq 13$, where $f_m$ is the function defined in equation \eqref{fml} of Section \ref{2} below (depending on a choice of divisor classes 
$ L_1, \ldots ,L_m$  as described at the end of Section \ref{2}.
\end{enumerate}
Furthermore,  $\mG$ still has non-negative Kodaira dimension if $\max \{n_1, \ldots n_m \} \leq g$ and $f_m(g;n_1, \ldots n_m) \leq 13$.

\end{theorem}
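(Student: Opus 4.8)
All three parts reduce to a statement about the single class $K_G=\pi^*K_{\mG}$ on $\mgn$. By Corollary~\ref{corr} and formula \eqref{kg},
\[
K_G=13\l+\psi-2\d-R,\qquad R=\sum_{k=1}^{m}\;\sum_{\substack{i<j\\ i,j\text{ in block }k}}\d_{0,\{i,j\}},
\]
since the transpositions of $G=S_{n_1}\times\ldots\times S_{n_m}$ are exactly those interchanging two indices of the same block. As $\pi\colon\mgn\to\mG$ is finite, $K_{\mG}$ is big if and only if $K_G$ is, and to promote bigness of $K_{\mG}$ to general type I would invoke the now-standard extension package for pluricanonical forms: $\mgn$ has canonical singularities for $g\ge4$, and a Reid--Tai analysis of the fixed loci of the $G$-action shows that pluricanonical forms on the smooth locus of $\mG$ extend to any resolution. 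It therefore suffices to exhibit $K_G$ as a sum of a big class and effective divisors, all of them $G$-invariant so that they descend along $\pi$.

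Part (i) I would prove by fibration rather than by divisors. Forgetting the marked points gives $\mGgn\to\mg$ with general fibre the product $\Sym^{n_1}(C)\times\ldots\times\Sym^{n_m}(C)$ over a general $[C]\in\mg$. Each factor $\Sym^{n_k}(C)$ is of general type exactly because $n_k\le g-1$: for general $C$ the Abel--Jacobi map sends it birationally onto a subvariety $W_{n_k}(C)$ of the Jacobian with finite stabilizer, and such a subvariety of an abelian variety has Kodaira dimension equal to its dimension. A product of general-type varieties is of general type, and for $g\ge24$ the base $\mg$ is of general type by the classical results behind Proposition~\ref{proposit}. Weak additivity of the Kodaira dimension (valid since the base is of general type) then yields $\k(\mGgn)\ge\k(\mg)+\dim(\mathrm{fibre})=\dim\mGgn$, exactly as in the proof of Proposition~\ref{one}(i).

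Parts (ii) and (iii) are the substantive cases, since for $g\le23$ the base $\mg$ is no longer of general type and the fibration argument collapses. Here I would run the Logan--Farkas--Verra divisor method. For each block $k$ one builds a $G$-invariant effective divisor $D_k$ on $\mgn$ from a Brill--Noether condition on the $n_k$ points of that block, namely that they fail to impose independent conditions on a chosen linear system; this condition is automatically symmetric in those points and hence descends. In part (iii) the system is cut out by the auxiliary line-bundle classes $L_k$, and it is precisely this extra freedom that lets one push the bound from $n_k\le g-2$ to $n_k\le g-1$ and produces the refined function of \eqref{fml}. Computing the classes $[D_k]$ in terms of $\l$, the $\psi_i$ and the boundary, one reorganizes \eqref{kg} as
\[
K_G=\bigl(13-f_m(g;n_1,\ldots,n_m)\bigr)\,\l+\alpha\,\psi+\sum_{k=1}^{m}\beta_k\,[D_k]+E,
\]
where $E$ is an effective combination of boundary divisors and $f_m$ is by definition the total $\l$-coefficient that the divisors must supply once their $\psi$- and $\d$-contributions are matched against $K_G$. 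The hypothesis $f_m\le13$ keeps the coefficient of $\l$ non-negative, and $\max n_k\le g-1$ keeps the $\psi$-coefficient $\alpha$ positive; since $\l$ is big and nef on $\mg$ while $\psi$ is big along the fibres of $\mGgn\to\mg$, the right-hand side is a big class plus effective divisors, so $K_G$ is big. When instead $\max n_k=g$ the fibre $\Sym^{n_k}(C)$ degenerates to a variety of Kodaira dimension $0$, the coefficient $\alpha$ drops to zero, and the same reorganization exhibits $K_G$ only as an effective class, which is the content of the final clause $\k(\mG)\ge0$.

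The main obstacle is the divisor bookkeeping in (ii) and (iii). One must verify that the Brill--Noether loci defining the $D_k$ are non-empty of the expected codimension one, so that the $D_k$ are honest divisors with the computed classes and neither empty nor all of $\mgn$, compute those classes exactly, and check that after the reorganization every boundary coefficient, including those coming from $R$, stays non-negative while the big part genuinely survives. The genuinely delicate point is the optimization over the classes $L_1,\ldots,L_m$ in part (iii): at $n_k=g-1$ the naive divisor of part (ii) degenerates, and recovering general type requires balancing the gain in the $\psi$- and $\l$-coefficients of the refined $D_k$ against the constraint $f_m\le13$.
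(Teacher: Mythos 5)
Your overall architecture matches the paper's: part (i) is proved exactly as you describe (fibre of $\mGgn\to\mg$ is $\prod_k \Sym^{n_k}(C)$, each factor of general type for $n_k\le g-1$, then weak additivity over the general-type base $\mg$ for $g\ge 24$), the reduction to bigness of $K_G=13\l+\psi-2\d-R$ via canonical singularities and the ramification formula is the paper's Theorem \ref{noadcon} and Corollary \ref{corr}, and for (ii)--(iii) the paper likewise seeks to write $K_G$ as a big class plus $G$-invariant effective divisors built blockwise. One small correction: bigness is extracted from the fact that $\psi$ descends to a big and nef class on the quotient (Proposition \ref{psi}, after Farkas--Verra), not from your looser ``$\l$ big on the base, $\psi$ big along the fibres'' argument; the pullback of $\l$ to $\mgn$ is nef but not big, so the clean statement to use is that (nef) $+$ (big and nef) is big.

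The genuine gap is in the divisor bookkeeping of (ii) and (iii), which you defer but which is where the theorem actually lives. First, the per-block Brill--Noether/Weierstrass divisors you propose all have vanishing $\dirr$-coefficient (as do the classes $W_{g,n_k}$ of \eqref{wgn}), so nothing in your decomposition can absorb the term $-2\dirr$ in $K_G$. The paper must additionally spend $2D_g$, where $D_g=s(g)\l-\dirr+\ldots$ is an effective divisor on $\mg$ of minimal slope; this is precisely the origin of the summand $2s(g)$ in the definition \eqref{fm} of $f_m$, which your gloss ``$f_m$ is the total $\l$-coefficient the divisors must supply'' does not reproduce. Second, the sum $L=\sum_k\pi_k^*W_k$ alone overshoots on the boundary: for $i,j$ in the same block $K_G$ carries $\d_{0,\{i,j\}}$ with coefficient $-3$ (namely $-2$ from $-2\d$ and $-1$ from $R$), while $L$ carries $-b(g,n_k)<-3$. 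The paper therefore takes only the fraction $\frac{1}{1+\epsilon}L$ with $\epsilon=\min_k\bigl(b(g,n_k)-3\bigr)$ and restores the lost $\psi$-coefficient with a multiple of the global Weierstrass divisor $W_{g,n}$ plus a strictly positive leftover $\eta\psi$. The threshold $\max n_k\le g-2$ in (ii) is exactly the condition $\epsilon>0$; at $\max n_k\in\{g-1,g\}$ one gets $\epsilon=\eta=0$, which degrades the conclusion to mere effectivity of $K_G$ (the final clause of the theorem) and forces the replacement divisors $L_k$ with $b_k>3$ in case (iii). Your attribution of the $g-1$ versus $g-2$ thresholds to positivity of the $\psi$-coefficient and to the degeneration of $\Sym^{n_k}(C)$ points in the right direction, but without the $\epsilon$-interpolation and the slope-$s(g)$ divisor the decomposition cannot be closed and the specific function $f_m$ of \eqref{fm} and \eqref{fml} cannot be recovered.
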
 

A geometric interpretation of this result is similar to the interpretation for $\cA_n$: The partition $P:  n=n_1 + \ldots + n_m$  induces the group 
$G=G_P=S_{n_1} \times \ldots \times S_{n_{m}}$, and the action of $G_P$ maps an $n$-pointed  genus g curve to a curve with markings in $n_1, \ldots, n_m$ (considered as an ordered $m-$ tuple) which we may call a $P -$ marked curve. Thus Theorem \ref{two} states that the moduli space of $P-$ marked genus $g$ curves is of general type (if the conditions in Theorem \ref{two} are satisfied).

Since there is no upper bound on the number of summands $m$ in the partition of $n$, an inspection of the defining equation for $f_m$ shows that the values of $n$ may tend to infinity, provided the subgroup $G$ is chosen appropriately. 
As in the above case for $ m=2, $ Riemann-Roch establishes a (small) transitional band beyond which $\mGgn$ becomes uniruled. We emphasize that the
existence of  this transitional band (for any {\em fixed} subgroup $G$) is different from the result for $\ngn$ proved in \cite{s2}: 
$\ngn=\mgnn/(\Z_2)^n\rtimes S_n$ is of general type for all values of $n$, if $g \geq 24$. This is perfectly compatible with Theorem \ref{two}, since $G:=(\Z_2)^n\rtimes S_n$ with its action on the $2n$ marked points is {\em not}
given by a direct product subgroup of $S_{2n},$  as required in Theorem \ref{two}. To understand this from a more conceptual point of view, however, is wide open at present. We emphasize that it is not merely the size of the group which is relevant: The alternating group $G=\cA_n$ might be taken arbitrarily large and still $\mG$ will preserve general type, while taking the quotient by much smaller groups, e.g. $G=S_{g+1}$ will turn $\mG$ to being uniruled.

As an application we consider $m=2$ and the special case $G=S_n \times S_n$. This quotient has a geometric interpretation as the universal difference variety, i.e. the fibre of the map $\mG \to \mg$ over a smooth curve $C$ is birational to the image of the difference map $C_n \times C_n \to J^0(C), (D,E)\mapsto D-E$,  see e.g. \cite{acgh}.

Then $\M^G_{g,2n}$ is uniruled for $n>g$ (by the Riemann-Roch argument from above, combined with the fact that a product is uniruled if one factor is), while Theorem \ref{two} gives the following.

\begin{proposition}\label{difvar}
The universal difference variety $\mgnn/S_n\times S_n$ is of general type for $g \geq 24, \quad n \leq g-1$, or, in the low-genus case, if $10 \leq g \leq 23$ and
$n_{\mathrm{min}}(g) \leq n \leq g-2$ where $n_{\mathrm{min}}(g) $ is specified in the following table

\begin{table}[h!]
$$\begin{array}{c|c|c|c|c| c|c|c|c|c| c|c|c|c|c}
g &10&11&12&13&14&15&16&17&18&19&20&21&22&23\\
\hline
n_{\mathrm{min}}&7&8&8&7&7&7&6&6&7&5&4&3&5&2
\end{array}$$
\caption{} \label{table dif}
\end{table}

\end{proposition}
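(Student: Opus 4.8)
The plan is to derive Proposition \ref{difvar} as the special case $m = 2$, $n_1 = n_2 = n$ of Theorem \ref{two}, in which the total number of marked points is $2n$ and $G = S_n \times S_n \subset S_{2n}$; with these choices $\mGgn$ is exactly $\mgnn/S_n\times S_n$, the universal difference variety, whose fibre over a smooth $[C] \in \mg$ is birational to the image of the difference map $C_n \times C_n \to J^0(C)$, $(D,E) \mapsto D-E$. The only real content is therefore to unwind the arithmetic already packaged inside Theorem \ref{two} and to read off the tabulated thresholds $n_{\min}(g)$.

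For the high-genus range I would simply invoke Theorem \ref{two}(i). With $m = 2$ and $n_1 = n_2 = n$ one has $\max\{n_1, n_2\} = n$, so the hypothesis $\max\{n_1,\ldots,n_m\} \leq g-1$ reads $n \leq g-1$, and Theorem \ref{two}(i) immediately gives that $\mgnn/S_n \times S_n$ is of general type whenever $g \geq 24$ and $n \leq g-1$. This is precisely the first assertion of the proposition and needs no further work.

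For the low-genus range $10 \leq g \leq 23$ I would apply Theorem \ref{two}(ii), and where necessary its sharper variant (iii). Specializing the function $f_m$ of \eqref{fm} (respectively \eqref{fml}) to $m = 2$, $n_1 = n_2 = n$, I would write $f_2(g;n,n)$ out as an explicit expression in $g$ and $n$ and solve $f_2(g;n,n) \leq 13$ at each fixed $g$, subject to the remaining constraint $\max\{n_1,n_2\} = n \leq g-2$. Since $f_2(g;n,n)$ is monotone (decreasing) in $n$, the inequality holds for all $n$ at least some threshold, and that threshold is exactly the entry $n_{\min}(g)$ of Table \ref{table dif}; the bound $n \leq g-2$ then closes the band. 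Verifying each column is a finite, mechanical check once the formula for $f_2$ is in hand.

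The main obstacle is the low-genus computation, and in particular the borderline entries. For several values of $g$ the coarse bound (ii) coming from \eqref{fm} will not by itself bring $n_{\min}$ down to the value claimed in Table \ref{table dif}; there one must pass to the refined bound (iii) of \eqref{fml}, which depends on an auxiliary choice of divisor classes $L_1, L_2$ on the two symmetric factors. The delicate step is to choose these $L_i$ optimally, balancing their contribution to the effective representative of $K_G$ against the ample part, so that $f_2(g;n,n,L_1,L_2) \leq 13$ is attained at the smallest admissible $n$; guided by the symmetry $n_1 = n_2$ and the difference-map description of the fibre, I would make a symmetric ansatz $L_1 = L_2$ and then optimise. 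Once this choice is pinned down the remaining inequalities are routine, and the uniruledness for $n > g$ recorded just before the proposition confirms that the band $n_{\min}(g) \leq n \leq g-2$ (respectively $n \leq g-1$) is essentially sharp.
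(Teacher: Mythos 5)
Your overall route is the paper's: reduce to Theorem \ref{two} with $m=2$ and $n_1=n_2=n$, settle $g\geq 24$ by part (i), settle the bulk of the low-genus band by checking $f_2(g;n,n)\leq 13$ from \eqref{fm}, and fall back on part (iii) with a symmetric choice $L_1=L_2$ for the borderline columns. The paper does exactly this, and for the two columns where \eqref{fm} alone falls short it names the auxiliary divisors explicitly: $L_1=L_2=F_{20,8}$ for $(g,n)=(20,4)$ and $L_1=L_2=\tilde{F}_{22,9}$ for $(g,n)=(22,5)$, both taken from Section \ref{div}. Your ``symmetric ansatz and optimise'' is the right instinct, but to actually close these two cases you would still have to land on these (or comparable) explicit classes so that the check $f_2\leq 13$ in \eqref{fml} goes through.

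There is one concrete point where your plan, as stated, fails: the entry $n_{\mathrm{min}}(13)=7$. The paper states explicitly (in the discussion following the proposition in the introduction) that this single entry is \emph{not} a consequence of Theorem \ref{two}: it is imported from \cite{fv3}, which in the special case $n=\lceil g/2\rceil$ can exploit an additional divisor not contained in Section \ref{div}. So your claim that ``the threshold \dots is exactly the entry $n_{\mathrm{min}}(g)$'' and that each column is a mechanical verification of $f_2\leq 13$ is false for $g=13$: neither \eqref{fm} nor \eqref{fml} with the divisors available in the paper reaches $n=7$ there, and the column must instead be cited from the literature. A smaller quibble: the monotonicity of $f_2(g;n,n)$ in $n$ is asserted rather than checked, though it is not needed if one simply verifies the finitely many pairs $(g,n)$ with $n\leq g-2$. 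Everything else --- part (i) for $g\geq 24$, part (ii) for all remaining columns except $(20,4)$ and $(22,5)$ --- matches the paper's proof.
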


This corollary amplifies the results of \cite{fv3}, which considers the universal difference variety in the special case $n=\lceil \dfrac{g}{2} \rceil$. We emphasize that the result in Table 3 for $g=13$ and $n=7$ is taken from \cite{fv3}; in view of the sharp coupling between $g$ and $n$ they are able to use in this special case an additional divisor, which is not applicable in the other cases and which is not contained in our Section \ref{div}. All other cases in Table 3 follow from our Theorem \ref{two}.

The outline of the paper is as follows. In Section \ref{prem} we introduce notation and some preliminary results, in Section \ref{div} we introduce the class of divisors used in our proof. Here we basically recall, for the sake of the reader, some material from \cite{s2}.  
%In Section \ref{1} we prove Theorem  \ref{1} while i
In Section \ref{2} we prove Theorem \ref{two}. %This is the most technical part of our paper, and - similarly to \cite{s2} - t
The use of a small program in computer algebra is appropriate to check our calculations.

\section{Preliminaries and Notation} \label{prem}

The aim of this section is to develop a sufficient condition for $\mS$ being of general type. This requires a basic understanding of the Picard group $\Pic( \mS)$ and an explicit description of the  boundary divisors and tautological classes on $\mS$ which we shall always consider as $S_n$-invariant divisors on $\mgn$ (any such divisor descends to a divisor on $\mS$).
For results on $\mgn$ we refer to the book \cite{acg} (containing in particular the relevant results from the papers \cite{ac1} and \cite{ac2}). All Picard groups are taken with rational coefficients and, in particular, we identify the the Picard group on the moduli stack with that of the corresponding coarse moduli space.
%\\

In particular, we recall the notion of the Hodge class $\l$ on $\mgn$, which automatically is $S_n-$ invariant and thus  gives the Hodge class $\l$ on $\mS$ (where, by the usual abuse of notation, we denote both classes by the same symbol; this abuse of notation is continued throughout the paper).
% by averaging via the action of 

In order to describe the relevant boundary divisors on $\mgn$, we recall that $\Delta_0$ (sometimes also called $\Delta_{\mathrm{irr}}$) on 
$\mg$ is the boundary component consisting of all (classes of) stable curves of arithmetical genus $g$, having at least one nodal point with the property that ungluing the curve at this node preserves connectedness. Furthermore, $\Delta_i$,  for $1 \leq i \leq \lfloor\frac{g}{2}\rfloor,$ denotes the boundary component of curves possessing a node of order $i$ (i.e. ungluing at this point decomposes the curve in two connected components of arithmetical genus $i$ and $g-i$ respectively). Similarly, on $\mgn$ and for any subset $S \subset \{1, \ldots,n\}$, we denote by $\Delta_{i,S}, 0 \leq i \leq \lfloor\frac{g}{2}\rfloor,$ the boundary component consisting of curves possessing a node of order $i$ such that after ungluing the connected component of genus $i$ contains precisely the marked points labeled by $S$. Note that, if $S$ contains at most 1 point, one has $\Delta_{0,S}= \emptyset$ (the existence of infinitely many automorphisms on the projective line technically violates stability). Thus, in that case, we shall henceforth consider $\Delta_{0,S}$ as the zero divisor.

We shall denote by $\delta_i, \delta_{i,S}$ the rational divisor classes of $\Delta_i, \Delta_{i,S}$ in $\Pic \mg$ and $\Pic \mgn$, respectively. Note that $\delta_0$ is also called $\delta_{\mathrm{irr}}$ in the literature, but we shall reserve the notation $\delta_{\mathrm{irr}}$ for the pull-back of $\d_0$ under the forgetful map
$\pi:\mgn \to \mg$.

We write $\d$ for the sum of all boundary divisors and set $\d_{i,s}=\sum_{|S|=s}\d_{i,S}$.We remark that a single $\d_{i,S} $ is not $G-$invariant (for a subgroup $G$ of $S_n$), but the divisor $\sum_{g \in G} \d_{i,g(S)}$, averaged by the action of $G$, obviously is. In particular $\d$ and $\d_{i,s}$ are always $G$-invariant. We shall use such an averaging in the proof of Theorem \ref{two}.

Next we recall the notion of the point bundles $\psi_i, 1 \leq i \leq n,$ on $\mgn$. Informally, the line bundle $\psi_i$ (sometimes called the cotangent class corresponding to the label $i$) is given by choosing as fibre of $\psi_i$ over a point $[C;x_1, \ldots, x_n]$ of $ \mgn$ the cotangent line $T_{x_i}^v(C)$. For later use we also set
\begin{equation}  \label{basechangeprep}
\omega_i:= \psi_i - \sum_{S \subset \{1, \ldots,n \}, S \ni i} \d_{0,S},
\end{equation}
and introduce  $\psi=\sum_{i=1}^n \psi_i.$  Clearly,  % on $\mgnn,$ 
the class $\psi$ is $S_n-$ invariant.\\

%We shall now consider the action of $S_n$ on these divisor classes on $\mgn.$ %Clearly, $\l, \psi$ and $\delta_{\mathrm{irr}}$ are $S_n -$ invariant.
%Furthermore, $\d_{i,S}$ is mapped into $\d_{j,T}$ by some element of $S_n$ if and only if $i=j, |S|=|T|$ and $S$ and $T$ contain the same number of  points. This implies that one gets a $G-$ invariant  divisor class by setting
%$$\d_{i;a,b}= \sum_S \d_{i,S},$$
%where the sum is taken over all subsets $S$ containing precisely $a$ pairs and %$b$ single points.\\

As a first step in the direction of our sufficient criterion we need the following result on the geometry of the moduli space $\mG.$

\begin{theorem}  \label{noadcon}
For any subgroup $G$ of $S_n$, the moduli space $\mG$ has only {\em canonical singularities} or in other words:
The singularities of $\mG$ do not impose {\em adjunction conditions}, i.e. if 
$\r: \tmG  \to \mG$ is a resolution of singularities, then for any  $\ell \in \N$
there is an isomorphism
\begin{equation}
\r^*: H^0((\mG)_{\mathrm{reg}}, K_{(\mG)_{\mathrm{reg}}}^{\otimes \ell}) \to H^0(\tmG,K_{\tmG}^{\otimes \ell}).
\end{equation}
Here $(\mG)_{\mathrm{reg}}$ denotes the set of regular points of $\mG$, considered as a projective variety and $K_{\tmG},  K_{(\mG)_{\mathrm{reg}}}$ denote the canonical classes on $\tmG$ and  $(\mG)_{\mathrm{reg}}$.
\end{theorem}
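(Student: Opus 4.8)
The assertion is precisely that $\mG$ has at worst \emph{canonical singularities}: by Reid's characterization, canonical singularities are exactly those across which every pluricanonical form on the regular locus extends to a resolution, which is the isomorphism $\r^*$ claimed (injectivity of $\r^*$ being automatic by restriction). The plan is to establish this locally via the Reid--Shepherd-Barron--Tai (Reid--Tai) criterion, adapting the classical analysis of Harris--Mumford for $\mg$ to the presence of the permutation action of $G$.

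First I would fix the local model. In the analytic (or \'etale) topology near a point $[C;x_1,\dots,x_n]$, the stack $\mgn$ is the quotient of its versal deformation space $V\cong\C^{3g-3+n}$ by the finite group $\mathrm{Aut}(C;x_1,\dots,x_n)$ acting linearly; passing to the further quotient by $G$, the coarse space $\mG$ is locally $V/\Gamma$, where $\Gamma$ is the group of $G$-admissible automorphisms, i.e. the pairs $(\phi,\sigma)$ with $\sigma\in G$ and $\phi\in\mathrm{Aut}(C)$ satisfying $\phi(x_i)=x_{\sigma(i)}$, acting linearly on $V$. This $\Gamma$ fits into an exact sequence $1\to\mathrm{Aut}(C;x_1,\dots,x_n)\to\Gamma\to G_0\to 1$ with $G_0\subseteq G$ the subgroup of permutations realized by automorphisms of $C$. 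Since $V/\Gamma$ is a normal, $\Q$-Gorenstein (finite quotient) singularity and the extension property is local, it suffices to verify the Reid--Tai criterion for $\Gamma$ acting on $V$.

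By that criterion, $V/\Gamma$ is canonical provided every $\gamma\in\Gamma$ that is not a quasi-reflection has $\mathrm{age}(\gamma)=\frac1r\sum_j a_j\ge 1$, where $r=\mathrm{ord}(\gamma)$ and the eigenvalues of $\gamma$ on $V$ are $e^{2\pi i a_j/r}$ with $0\le a_j<r$. The quasi-reflections (elements fixing a hyperplane of $V$) are disposed of through Chevalley--Shephard--Todd: they generate a normal subgroup $W\trianglelefteq\Gamma$ with $V/W\cong V$ smooth, and one applies the age bound to the residual action of $\Gamma/W$. Here the quasi-reflections are exactly the transpositions $(i\;j)\in G$ (with $\phi=\mathrm{id}$), responsible for the ramification divisor $R=\sum_{(i\;j)\in G}\d_{0,\{i,j\}}$ found above, together with the boundary-automorphism quasi-reflections familiar from the study of $\mg$ (e.g. the elliptic-tail involution).

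The heart of the proof is then the age estimate, which I would carry out by decomposing the $\gamma$-action on $V$ according to the geometry of the (possibly nodal) curve $C$: the equisingular deformations of the components of the normalization, the cotangent directions at the marked points (the $\psi_i$), and the node-smoothing parameters $t_\nu$, on each of which $\gamma$ acts either by the product of the two branch rotation numbers or by permuting nodes. Rewriting the $a_j$ in terms of these rotation numbers and the permutation $\sigma$, one shows $\mathrm{age}(\gamma)\ge 1$ for every non-quasi-reflection. I expect the main obstacle to be the usual borderline cases of low order: involutions and elliptic-tail--type automorphisms give ages close to the threshold and must be bounded by a direct computation, exactly as in Harris--Mumford. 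The genuinely new input is the twisted elements $(\phi,\sigma)$ with $\sigma\neq 1$, for which $\phi$ need not preserve the marked curve; the saving feature is that a nontrivial $\sigma$ permutes marked points (and, on the boundary, whole components and nodes), contributing many eigenvalues bounded away from $1$ and so forcing $\mathrm{age}(\gamma)\ge 1$. The delicate point is to confirm that the only elements able to stay below the threshold are genuine quasi-reflections — a single transposition or one of the classical boundary involutions — after which feeding the estimates into Reid--Tai yields the desired extension $\r^*$ of pluricanonical forms.
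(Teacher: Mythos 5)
Your overall strategy --- verify the Reid--Tai criterion for the local quotient groups $\Gamma$ of $G$-admissible automorphisms --- is exactly the method underlying the paper's proof, but the paper does not redo the age estimates at all: it observes that every automorphism relevant for $\mG=\mgn/G$ (a pair $(\phi,\sigma)$ with $\sigma\in G\subseteq S_n$) is in particular an automorphism relevant for $\mS=\mgn/S_n$, and the Reid--Tai criterion has already been verified for all of those in Theorem 1.1 of \cite{fv1} (building on \cite{hm}). So the whole theorem follows by restriction of a known verification to a subset of group elements, with no new computation. You instead propose to carry out the Harris--Mumford/Farkas--Verra analysis from scratch, and this is where your writeup falls short of a proof: the decisive steps --- the age bound for twisted elements $(\phi,\sigma)$ with $\sigma\neq 1$, the borderline low-order cases, and the confirmation that the only sub-threshold elements are quasi-reflections --- are announced (``I expect\dots'', ``the delicate point is to confirm\dots'') rather than proved. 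Those estimates are the entire content of the statement, so as it stands the argument has a genuine gap, one that is closed most efficiently not by doing the computation but by the containment observation above.

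Two smaller points. First, the quasi-reflections responsible for the ramification $R=\sum_{(i\,j)\in G}\d_{0,\{i,j\}}$ do not have $\phi=\mathrm{id}$: on a smooth curve no automorphism with $\phi=\mathrm{id}$ can realize a nontrivial $\sigma$, and the relevant element is the involution of the rational bridge component carrying $x_i,x_j$, which swaps the two marked points while fixing the rest of the curve. Second, if one reduces to the $S_n$ case as the paper does, one should note (as you implicitly do via Chevalley--Shephard--Todd) that the subgroup generated by quasi-reflections can shrink when passing from $S_n$ to $G$, so strictly speaking the criterion must be inherited in the form actually verified in \cite{fv1}, namely for elements modulo products with quasi-reflections; this is consistent with the reduction but worth stating.
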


The proof %is in some sense already implicitly contained in % 
follows the lines of the 
the proof of Theorem 1.1 in \cite{fv1}. % and is as follows....%shall be  omitted here. 
We shall briefly review the argument. A crucial input is Theorem 2 of the seminal paper \cite{hm} which proves that the moduli space $\mg$ has only canonical singularities. The proof relies on the Reid-Tai criterion: Pluricanonical forms (i.e. sections of $K^{\otimes \ell}$) extend to the resolution of singularities, if for any automorphism $\sigma$ of an object of the moduli space the so-called {\em age} satisfies  $age(\sigma) \geq 1$. The proof in \cite{fv1} then proceeds to verify the Reid-Tai criterion for the quotient of $\mgn$ by the full symmetric group $S_n $. Here one specifically has to consider those automorphisms of a given curve which act as a permutation of the marked points. For all those automorphisms the proof in \cite{fv1} verifies the   Reid-Tai criterion. Thus, in particular, the criterion is verified for all automorphisms which act on the marked points as an element of some subgroup of $S_n$. Thus, the proof in \cite{fv1} actually establishes the existence of only canonical singularities for {\em any} quotient $\mgn/G$ where $G$ is a subgroup of $S_n$. Clearly, this is our theorem.

Theorem \ref{noadcon} implies that the Kodaira dimension of $\mG$ equals the Kodaira-Iitaka dimension of the canonical class $K_{\mG}$. In particular, $\mG$ is of general type if  $K_{\mG}$ is a positive linear combination of an ample and an effective  rational class on $\mG$.  It is convenient to slightly reformulate this result. We need

\begin{proposition} \label{psi}
The class $\psi$ on $\mgn$ is the pull-back of a divisor class on $\mG$ which is big and nef.
\end{proposition}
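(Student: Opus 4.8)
The plan is to separate a formal descent step from the geometric core, which is the positivity of $\psi$ on $\mgn$ itself. First I would note that $\psi$, being $S_n$-invariant, is a fortiori $G$-invariant, so by the descent of invariant divisor classes recalled above there is a $\Q$-divisor class $\bar\psi$ on $\mG$ with $\pi^*\bar\psi=\psi$, where $\pi\colon\mgn\to\mG$ denotes the quotient morphism. Since $\pi$ is finite and surjective, $\bar\psi$ is nef (respectively big) on $\mG$ if and only if its pullback $\psi=\pi^*\bar\psi$ is nef (respectively big) on $\mgn$: nefness is preserved and reflected by the projection formula applied to curves, and bigness by comparing the section spaces of $\bar\psi^{\otimes m}$ and of $\pi^*\bar\psi^{\otimes m}$. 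The statement therefore reduces to showing that $\psi$ is big and nef on $\mgn$.

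For nefness I would use that each cotangent class $\psi_i$ is nef, so that $\psi=\sum_{i=1}^n\psi_i$ is a sum of nef classes. Concretely, testing $\psi_i$ against an arbitrary irreducible complete curve $B\to\mgn$ with induced family of pointed stable curves $\mathcal C\to B$ and $i$-th section $\sigma_i$, one has $\psi_i\cdot B=\deg_B\sigma_i^*\omega_{\mathcal C/B}=\omega_{\mathcal C/B}\cdot\sigma_i(B)$, the last intersection being computed on the total space $\mathcal C$; this is $\geq 0$ by the Arakelov-type semipositivity of the relative dualizing sheaf. Summing over $i$ shows $\psi\cdot B\geq 0$ for every such $B$, hence $\psi$ is nef.

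The remaining and decisive point is bigness. As $\psi$ is now nef, bigness is equivalent to the strict positivity of the top self-intersection $\psi^{\,3g-3+n}>0$. I would exploit the forgetful morphism $p\colon\mgn\to\mg$ dropping all marked points, whose general fibre over a smooth $[C]\in\mg$ is birational to $C^{\,n}$; there $\psi$ restricts to $\sum_{i=1}^n\mathrm{pr}_i^*K_C$, which is ample for $g\geq 2$ since $K_C$ is ample. Thus $\psi$ is relatively ample, hence relatively big, for $p$. The main obstacle is that relative bigness together with nefness does not by itself force $\psi^{\,3g-3+n}>0$ — a class pulled back from the base of a fibration is the standard cautionary example — so one must also capture positivity in the $3g-3$ base directions. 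I would do this by evaluating $\psi^{\,3g-3+n}=\int_{\mg}p_*(\psi^{\,3g-3+n})$ via the standard push-forward formulas for the one-point forgetful maps (notably $\pi_*(\psi_{n+1}^{a+1})=\kappa_a$ and the comparison of cotangent classes under such maps), reducing the question to a tautological top intersection number on $\mg$. Its strict positivity rests on the relative ampleness of $\psi$ along the fibres $C^{\,n}$ together with the positivity of the tautological intersection numbers on $\mg$ governed by the nef and big class $\kappa_1$; this final positivity computation is where the real work lies, the descent and nefness steps being essentially formal.
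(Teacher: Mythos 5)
Your route is genuinely different from the paper's: the paper disposes of the whole statement in three lines by citing Proposition 1.2 of \cite{fv2}, which already asserts that the $S_n$-invariant class $\psi$ descends to a big and nef class $N_{g,n}$ on $\mS$, and then pulls $N_{g,n}$ back along the finite surjection $\n\colon\mG\to\mS$ (pullback of a big and nef class under a finite surjective morphism is big and nef, and $\pi^*\n^*N_{g,n}=\psi$). Your descent of $\psi$ to $\mG$ and your reduction of nefness and bigness along the finite quotient $\pi$ are correct and essentially reproduce this formal part. The difficulty is that you then undertake to reprove the Farkas--Verra input on $\mgn$ from scratch, and that part has two gaps.

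First, nefness: the identity $\psi_i=\sigma_i^*\omega_{\mathcal{C}/B}$ holds for the universal \emph{pointed} stable curve, and the relative dualizing sheaf of that family is \emph{not} nef on its total space --- it has degree $-1$ on a rational tail meeting the rest of the fibre in one node --- so the Arakelov-type semipositivity you invoke applies to the unpointed stable model and yields nefness of $\omega_i=\psi_i-\sum_{S\ni i}\d_{0,S}$ (equation \eqref{basechangeprep}), not of $\psi_i$. Nefness of $\psi_i$ is a true and standard fact, but it needs a different argument. Second, and more seriously, bigness: you correctly reduce to $\psi^{3g-3+n}>0$ and correctly note that relative ampleness over $\mg$ together with nefness does not imply this, but you then stop, explicitly deferring ``the real work.'' As written this is not a proof. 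The gap can be closed --- for instance by expanding $\psi^{3g-3+n}$ multinomially into the intersection numbers $\int_{\mgn}\psi_1^{a_1}\cdots\psi_n^{a_n}$, all of which are strictly positive --- but that positivity (or the Kouvidakis/Morita-type ampleness results on which \cite{fv2} ultimately relies) is precisely the nontrivial content and must be proved or cited; the paper's one-line appeal to \cite{fv2} is the intended solution.
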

 
\begin{proof}
 Farkas and Verra have proven in  Proposition 1.2 of \cite{fv2} that the $S_{n}$-invariant class $\psi$ descends to a big and nef divisor class $N_{g,n}$ on the quotient space $\mgn /S_{n}$. Consider the sequence of  natural projections
 $\mgn \xrightarrow{\pi} \mG \xrightarrow{\n} \mS.$
%   $$\begin{tikzpicture}
%\matrix (m) [matrix of math nodes,row sep=2em,column sep=3em,minimum width=2em]
%{\mgnn & \mgnn /G & \mgnn/S_{2n} \\};
%\path[->>]
%   (m-1-1) edge node[above] {$\pi$} (m-1-2);
%\path[->>]
%	(m-1-2) edge node [above] {$\n$} (m-1-3);
%         
%\end{tikzpicture}$$
Then $\n^*(N_{g,n})$ is a big and nef divisor on $\mG=\mgn/G$ and $\pi^*(\n^*(N_{g,n})=\psi$. 
\end{proof}

 Now observe that the ramification divisor (class) of the quotient map $\pi:\mgn \to \mG$ is precisely 
\begin{equation}\label{R}
R = \sum_{(i,j) \in G} \d_{0, \{i,j\}}.
\end{equation} 
In fact ramification requires existence of a non-trivial automorphism belonging to $G$, and by standard results this only occurs in the presence of the projective line with 2 marked points that can be swapped. The non-trivial automorphism is then the transposition (of the labels) of these two marked points. 
 Furthermore, the Hurwitz formula for the quotient map $\pi$ gives
\begin{equation}\label{K}
K:= \pi^*(K_{\mG})= K_{\mgn} - R= 13 \l + \psi - 2 \d -\sum_{(i,j) \in G} \d_{0, \{i,j\}} .
\end{equation} 

%Here $\d$ is the class of the boundary of $\mgn$, i.e.
%$$ \d = \dirr + \sum_{0 \leq i \leq \lfloor\frac{g}{2}\rfloor} \sum_{S \subset \{ 1, \ldots ,n \} } \d_{i,S}. $$

 We thus obtain the final form of our sufficient condition:
 If  $K$ is a positive multiple of $\psi$ + some effective $G - $ invariant divisor class on $\mgn$, then $\mG$ is of general type.

%\section{Effective $G$-invariant divisors on $\mgnn$} \label{div}
%\setcounter{equation}{0}

\section{divisors}   \label{div}

In this section we introduce the relevant $S_n -$ invariant effective divisors on $\mgn.$ First we recall the following standard result.

\begin{proposition}  \label{divisor}
Let $f:X \to Y$ be a morphism of projective schemes, $D \subset Y$ be an effective divisor and assume that $f(X)$ is not contained in $D$. Then $f^*(D)$ is an effective divisor on $X$.
\end{proposition}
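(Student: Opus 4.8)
The plan is to reduce the statement to the section-theoretic description of an effective Cartier divisor and then observe that the hypothesis on $f(X)$ is precisely what prevents the pulled-back section from vanishing identically. First I would recall that an effective divisor $D$ on $Y$ is the same datum as a line bundle $\mathcal{O}_Y(D)$ together with a global section $s_D \in H^0(Y, \mathcal{O}_Y(D))$ whose zero scheme equals $D$; on a trivializing open cover $\{U_i\}$ the section $s_D$ is given by a regular function $g_i$ (a non-zero-divisor in $\mathcal{O}_Y(U_i)$) with $D \cap U_i = V(g_i)$, and the support of $D$ is exactly the vanishing locus of $s_D$.

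Next I would pull this datum back along $f$. Setting $\mathcal{O}_X(f^*D) := f^*\mathcal{O}_Y(D)$ and $f^*s_D \in H^0(X, f^*\mathcal{O}_Y(D))$, one defines $f^*D$ to be the zero scheme of $f^*s_D$, which locally is cut out by the functions $g_i \circ f$ on $f^{-1}(U_i)$. Everything then hinges on checking that $f^*s_D$ is a \emph{nonzero} section. This is where the hypothesis enters: since $f(X)$ is not contained in $\mathrm{Supp}(D)$, there is a point $x \in X$ with $f(x) \notin \mathrm{Supp}(D)$, so $s_D(f(x)) \neq 0$ in the fibre $\mathcal{O}_Y(D) \otimes \kappa(f(x))$, and therefore $(f^*s_D)(x) \neq 0$. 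Hence $f^*s_D$ is not the zero section, and the zero scheme of a nonzero section of a line bundle is an effective divisor of the expected codimension.

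The main (and essentially the only) obstacle is the familiar pathology that on a \emph{reducible} $X$ a globally nonzero section may still vanish identically along an entire irreducible component, so that the local equations $g_i \circ f$ become zero-divisors there and $f^*D$ fails to be Cartier. I would dispose of this by running the argument on each irreducible component separately, reading the hypothesis ``$f(X) \not\subseteq D$'' as the assertion that the image of \emph{each} component of $X$ avoids $\mathrm{Supp}(D)$; this is automatic when $X$ is a variety, which is the only case arising in the applications to the moduli spaces above. With this reading $g_i \circ f$ is a non-zero-divisor on every component, so $f^*D$ is a genuine effective Cartier divisor, completing the proof.
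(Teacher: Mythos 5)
The paper offers no proof of this proposition at all: it is stated as a ``standard result'' that is merely recalled, with the only accompanying remark being that the hypothesis is automatic in the paper's applications because the maps considered there are surjective. So there is nothing to compare your argument against; what matters is whether your proof stands on its own, and it does. You give the standard section-theoretic argument correctly: realize $D$ as the zero scheme of a section $s_D$ of $\mathcal{O}_Y(D)$, pull back the pair $(\mathcal{O}_Y(D), s_D)$, and use the hypothesis $f(X) \not\subseteq \mathrm{Supp}(D)$ to produce a point where the pulled-back section is nonvanishing, hence nonzero. You also correctly identify the one genuine subtlety --- that on a reducible $X$ a globally nonzero section can still vanish on a whole component, so that the local equations $g_i \circ f$ become zero-divisors and the zero scheme fails to be Cartier --- and you resolve it honestly, either by reading the hypothesis componentwise or by noting that $X$ is irreducible in all the applications (which is indeed the case here: the sources are $\mgn$ and its quotients, all irreducible). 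This is more care than the paper takes, and the proof is complete and correct.
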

In our case the assumption of this proposition is fulfilled automatically since we only consider surjective maps.

%Die ersten Divisoren, die sich anbieten, sind 
%The most natural effective divisors on $\mg$ are the Brill-Noether-Divisors, which contain all curves $C$ possessing a $\mathfrak{g}^r_d$ for fixed $g,r,d$ with Brill-Noether number $\r(g,r,d)=-1$. These exist as long as $g+1$ is composite. We recall from \cite{eh}:

%\begin{lemma}
%Assume that $g+1$ is not prime and fix some integers $r,s\geq 1$ such that $g
%+1=(r+1)(s-1).$ Then 
%$$\mathfrak{BN}_g:=\{[C]\in \mathcal{M}_g | C \mbox{ carries a } \mathfrak{g} %^r_{rs-1} \}$$
%is an (effective) divisor on $\mathcal{M}_g$. Furthermore the class of its compactification as a $\Q$-divisor is given by
%$$[\overline{\mathfrak{ BN}}_g]= c \Big( (g+3)\l -\frac{g+1}{6} \d_0 -\sum_{i= %1}^{\lfloor\frac{g}{2}\rfloor} i(g-i)\d_i \Big),$$
%for some positive rational number $c$.
%\end{lemma}

%\begin{proposition}
%Assume that $g+1$ is not prime. Then, for some $c>0$, there is an effective Divisor (of Brill-Noether type)
%\begin{equation}
%\mathfrak{ BN}\equiv c \Big( (g+3)\l -\frac{g+1}{6} \d_0 -\sum_{i= 1}^{\lfloor\frac{g}{2}\rfloor} i(g-i)\d_i \Big),
%\end{equation}
%where the  hand side specifies its class.
%\end{proposition}
We shall need invariant divisors on $\mgn$. Rather than exhibiting them directly by explicit definitions, we shall simply recall from the literature the existence of special divisors with small  {\em slope}:  If $g+1$ is not prime, then there is an effective $S_n-$invariant divisor class $D$ on $\mg$ (of Brill-Noether type) of slope
\begin{equation}  \label{sl1}
 s(D)=6 + \frac{12}{g+1},
\end{equation} 
while for $g+1$ odd (which trivially includes the case $g+1$ being prime) there is an effective $S_n-$invariant divisor class $D$ on $\mg$ (of Giesecker-Petri type) of slope
\begin{equation}  \label{sl2}
 s(D)=6 + \frac{14 g+4}{g^2+2g},
\end{equation} 
see \cite{eh}. For a few cases ($g=10, 12, 16, 21$) it has been shown in   \cite{fv4} (for g=12)  and \cite{f} (for the other 3 cases) that there exist special effective invariant divisors $D=D_g$  with even smaller slope, i.e.
\begin{equation} \label{sl3}
s(D_g) = 
\begin{cases}
7 & g=10 \\
6 + \frac{563}{642}  \qquad \qquad  & g=12 \\
6 + \frac{41}{61}  \qquad \qquad  & g=16 \\
6 + \frac{197}{377}  \qquad \qquad  & g=21 .
\end{cases}
\end{equation}
We shall need them in the proof of Theorem \ref{one}. 

Finally, we need divisors of Weierstrass-type, and these we have to introduce explicitly. We recall from \cite{l1}, Section 5,
the divisors $W(g;a_1, \ldots,a_m)$ on $\mgm$, where $a_i \geq 1$ and $\sum a_i=g$.
They are given by the locus of curves $C$ with marked points $p_1, \ldots, p_m$ such that there exists a $\mathfrak{g}^1_g$ on $C$ containing $\sum_{1 \leq i \leq m}a_i p_i$. We want to minimize the distance between the weights $a_i$. Thus we decompose
%Um zu zeigen, dass $\ngn$ von general type ist brauchen wir noch Divisoren vom Weierstra\ss  typ $W(g;a_1,\ldots,a_m)$ mit $\sum a_i=g$ auf $\mgm$ mit $m\leq g$ (s. \cite{l}  Section 5). Dabei sollte der Abstand zwischen den Gewichten $a_i$ minimiert werden. Wir zerlegen daher 
$g=km+r$, with  $r<m$,  and set
%und betrachten 
\begin{equation}\label{wmdef}
\tilde{W}_{g,m} =W(g;a_1, \ldots,a_m), \qquad a_j=k+1 \, \, (1\leq j \leq r), \quad  
a_j=k  \, \, (r+1 \leq j \leq m).
\end{equation}

This gives, in view of \cite{l1}, Theorem 5.4,
\begin{equation}\label{wm}
\begin{split}
\tilde{W}_{g,m} 
& = -\l+	\sum_{i=1}^r \frac{(k+1)(k+2)}{2}\omega_i +\sum_{i=r+1}^m \frac{k(k+1)}{2} \omega_i  -0\cdot\dirr 
\\&-\sum_{i,j\leq r} (k+1)^2 \d_{0,\{i,j\} }  -\sum_{i\leq r,j>r} k(k+1) \d_{0,\{i,j\} } -\sum_{i,j>r} k^2 \d_{0,\{i,j\} } \\
&-\mbox{higher order boundary terms},  
\end{split}
\end{equation}
where {\em higher order}  means a positive linear combination of $\d_{i,S}$ where either $i>0$ or $|S|>2$.

%Wenn wir diesen Divisor via einer Projektion $\Mg_{g,2n}\to \Mg_{g,m}$ mit $m\leq 2n$ zur"uck ziehen, so wird das Ergebnis i.A. nicht $G$-invariant. Daher m"ussen wir  "uber alle m"oglichen Pullbacks summieren. Seien $S,T\subset \{1,\ldots,2n\}$ disjunkt mit $|S|=r,| T|=m-r$ und $\pi_{S,T}:  \Mg_{g,2n}\to \Mg_{g,m}$ eine Projektion, die die Elemente von $S$ auf die $r$ Punkte von Gewicht $k+1$ und die Elemente von $T$ auf die $m-r$ Punkte von Gewicht $k$ abbildet. Dann gibt es $\binom{2n}{r} \binom{2n-r}{m-r}$ solcher Projektionen.
From $\tilde{W}_{g,m}$ we want to generate  a $G - $ invariant divisor class $\tilde{W}_{g,n,m}$ on $\mgn$, by summing over appropriate pullbacks. Thus we let $S,T$ be disjoint subsets of $\{1,\ldots,n \}$ with $|S|=r$ and $|T|=m-r$ (recall that $r$ is fixed by the decomposition   $g=mk+r$) and let 
\begin{equation}
\pi_{S,T}: \mgn \to \mgm
\end{equation}
be a projection (i.e. a surjective morphism of projective varieties) mapping the class 
$[C; q_1, \ldots ,q_{n}]$ to $[C; p_1, \ldots ,p_{m}],$
where the points $q_i$ labeled by $S$ are sent to the points $p_1, \ldots,p_r$ (all with weights $a_i=k+1$) and the points labeled by $T$ are sent to the points $p_{r+1}, \ldots,p_m$ (all with weights equal to $k$). Clearly, for fixed $g$, there are precisely  $\binom{n}{r} \binom{n-r}{m-r}$ such projections. With this notation, we introduce
\begin{equation}\label{W}
\begin{split}
\tilde{W}_{g,n,m} := &\sum_{S,T} \pi_{S,T}^* \tilde{W}_{g,m} \\
= &-w_\l \l +w_\psi \psi+0\cdot\dirr-\sum_{s\geq 2} w_s \d_{0,s}
\\&-\mbox{higher order boundary terms},  
\end{split}
\end{equation}
where  {\em higher order} denotes a positive linear combination of boundary divisors $\d_{i,S}$ with     $i \geq 1$,% and 
\begin{equation}  \label{ws}
w_s \geq s w_\psi \geq 3w_\psi \qquad \mbox{for } s\geq 3,
\end{equation}
\begin{equation}  \label{w lambda}
w_\l= \binom{n}{r} \binom{n-r}{m-r},
\end{equation}
\begin{equation}\label{w psi}
 w_\psi=\binom{n-1}{r-1} \binom{n-r}{m-r} \dfrac{(k+1)(k+2)}{2} +\binom{n-1}{r} \binom{n-r-1}{m-r-1} \dfrac{k(k+1)}{2},
\end{equation}
\begin{equation}\label{w2}
\begin{split}
w_2= &2w_\psi + \binom{n-2}{r-2} \binom{n-r}{m-r}(k+1)^2 +2\binom{n-2}{r-1} \binom{n-r-1}{m-r-1}k(k+1) 
 \\&+\binom{n-2}{r} \binom{n-r-2}{m-r-2}k^2.  
\end{split}
\end{equation}

 Equation \eqref{w psi} is proved by applying pullback to \eqref{wm}, using $\o:=\sum_{i=1}^{n} \o_i$,
 \begin{equation}
\sum_{S,T} \sum_{i=1}^r  \pi_{S,T}^* \o_i = \binom{n}{r} \binom{n-r}{m-r} \frac{r}{n} \o =\binom{n-1}{r-1} \binom{n-r}{m-r}\o
 \end{equation}
and
\begin{equation}
\sum_{S,T} \sum_{i=r+1}^m  \pi_{S,T}^* \o_i = \binom{n}{r} \binom{n-r}{m-r} \frac{m-r}{n} \o=\binom{n-1}{r} \binom{n-r-1}{m-r-1},
\end{equation}
noting that equation \eqref{basechangeprep} implies
\begin{equation} \label{basechange}
\o=\psi - \sum_S |S| \d_{0,S}.
\end{equation}

The sums over the pullbacks of the boundary divisors are computed by similar combinatorial considerations which we leave to the reader. Note that both the summand
$2w_\psi$ on the right hand side of \eqref{w2}  and the bound in \eqref{ws} are  generated by the change of basis given in \eqref{basechangeprep}.\\
Next, for the proof of Theorem \ref{two} it will be convenient to renormalize the divisor $\tilde{W}_{g,n,m}$ in such a way that the coefficient of $\psi$ is equal to 1. We thus introduce  $W_{g,n}= \frac{1}{w_\psi} \tilde{W}_{g,m}$ and find, setting $m= \min \{g,n \}$, %in \eqref{  }
\begin{equation}
\label{wgn}
%\begin{split}
W_{g,n}  %&\sum_{S,T} \pi_{S,T}^* W_m \\
= a(g,n) \l  +  \psi+ 0\cdot\dirr- \sum_{s\geq 2} b_s \d_{0,s}
-\mbox{higher order boundary terms},  
%\end{split}
\end{equation}
where  {\em higher order} denotes a positive linear combination of boundary divisors $\d_{i,S}$ with     $i \geq 1$,
\begin{equation}
a(g,n)=
\begin{cases}  

\frac{2n}{(k+1)(g+r)} & g=kn+r, r<n \\
\frac{n}{g} &  n>g    
\end{cases}
\end{equation}
$b_2=b(g,n)$  with
\begin{equation}
b(g,n)= \begin{cases}
2+ \frac{2}{n-1}\frac{r(r-1)(k+1)^2 + 2r(n-r)k(k-r)+(n-r)(n-r-1)k^2}{r(k+1)(k+2)+(n-r)k(k+1)} \qquad \qquad & g=kn+r,r<n   \\
2 + \frac{g-1}{n-1} & n>g
\end{cases}
\end{equation}
and  $b_s > b_2$ for all $s>2$.

In addition, we shall use the anti ramification divisor classes from \cite{fv2}, Section 2, to obtain (by straightforward though somewhat lenghty algebraic computation) the existence of effective divisor classes $T_g$ on $\M_{g,g-1}$ satisfying
\begin{equation}  \label{tg}
T_g = - \frac{g-7}{g-2} \lambda + \psi - \frac{1}{2g-4} \dirr - \left( 3 + \frac{1}{2g-4} \right) \delta_{0,2} + h.t.
\end{equation}
 where the higher order terms $h.t.$ denote a linear combination of all other boundary divisors with coefficients $\leq -2$.
 
Furthermore, normalizing the divisor classes in \cite{fv2}, Theorem 3.1, one obtains for $g \geq       1$ and any $1 \leq m \leq g/2$ effective divisor classes $F_{g,m}$ on $\mgn$ (with $n=g-2m$)  satisfying
\begin{equation}   \label{fgm}
F_{g,m} = a \lambda + \psi - b_{irr} \dirr -   b_{0,2} \delta_{0,2} + h.t.,
\end{equation}
where, as above,  the higher order terms $h.t.$ denote a linear combination of all other boundary divisors with coefficients $\leq -2$ and
\begin{equation}
a= \frac{n}{n-1}\left( \frac{10m}{g-2}+\frac{1-g}{g-m} \right), \quad b_{0,2}=3+\frac{(g-n)(n+1)}{(g+n)(n-1)}, \quad b_{irr}= \frac{nm}{(g-2)(n-1)}
\end{equation}
Finally, to cover the case where $g$ and $n$ have different parity, we set $n=g-2m+1$ and pull back $F_{g,m}$ given in eqution \ref{fgm} in all possible ways to $\mgn$ (via a forgetful map forgetting one of the marked points). Summing all these divisor classes and then normalizing gives an effective divisor class $\tilde{F}_{g,m}$ on $\mgn$ satisfying
\begin{equation}   \label{tfgm}
\tilde{F}_{g,m} = a \lambda + \psi - b_{irr} \dirr -   b_{0,2} \delta_{0,2} + h.t.,
\end{equation}
where, as above,  the higher order terms $h.t.$ denote a linear combination of all other boundary divisors with coefficients $\leq -2$ and
\begin{equation}
a= \frac{n}{n-2}\left( \frac{10m}{g-2}+\frac{1-g}{g-m} \right), \quad b_{0,2}=3+\frac{g-n-1}{g+n-1}, \quad b_{irr}= \frac{nm}{(g-2)(n-2)}.
\end{equation}

\section{Proof of Theorem \ref{two}}  \label{2}

For  assertion (i), recall that $\mg $ is of general type for $g \geq 24.$ Furthermore, the generic fibre of the  canonical projection $ \mG \to \mg$ is
$$ C^n /G  \simeq    (C^{n_1}/S_{n_1}) \times \ldots \times (C^{n_m}/S_{n_m}).$$
For $\max\{n_1, \ldots ,n_m\} \leq g-1$ each factor $C^{n_i}/S_{n_i}$ is of general type, and thus is the product $ C^n /G.$ Therefore the assertion follows from weak additivity of the Kodaira dimension.

Assertion (ii) is more complicated, involving divisors. We take  Weierstrass divisors $W_k=W_{g, n_k}$ on each $\M_{g,n_k}$ and $W=W_{g,n}$ on $\mgn$, with coefficients $a(g,n_k),b(g,n_k)$ and $a(g,n),b(g,n)$ respectively, see Section \ref{div} \eqref{wgn}.  Let $S_k$ be the set of points corresponding to the summand $n_k$ in the partition $n=\sum_{1 \leq k \leq m} n_k$, and denote by $\pi_k: \mgn \to \M_{g,n_k}$ the forgetful map forgetting all points  except those in $S_k$. In order to calculate $\pi_k^* W_k$ we introduce some notation.

For any sets $S \subset \{1, \ldots,n\}$ we define
\begin{equation}
\d_{i,s}^{S,\ell} := \sum_{|T \cap S|=\ell, |T|=s} \d_{i,T}
\end{equation}
and denote by $\pi_S: \mgn \to \M_{g,|S|}$ the natural forgetful map. With this notation, by the usual abuse of notation amplified in Section \ref{prem}, we have 

\begin{proposition}  \label{forgetful}

The pull-back divisors are 
$\pi^*_S(\l)=\l, \quad \pi^*_S(\dirr) =\dirr$ and
\begin{equation}
\pi^*_S(\psi) = \sum_{i \in S} \psi_i - \sum_{s=2}^{n-n_k +1} \d_{0,s}^{S,1}, 
\qquad \qquad  \pi^*_S(\d_{i,s}) = \sum_{\ell \geq 0} \d_{i,s+\ell}^{S,s}.
\end{equation}
\end{proposition}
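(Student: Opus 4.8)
The plan is to reduce the whole computation to the one-point forgetful map and then iterate. Factor $\pi_S$ as a composition $\pi_S=\f_1\circ\cdots\circ\f_{n-n_k}$ of forgetful maps, each $\f_j$ dropping a single marked point lying outside $S$; since one-point forgetful maps commute, this factorization is independent of the chosen order. For a single forgetful map $\f$ that forgets a point $p$ I would invoke the standard comparison formulas (see \cite{acg}): $\f^*\l=\l$, $\f^*\dirr=\dirr$, $\f^*\psi_i=\psi_i-\d_{0,\{i,p\}}$ for every retained point $i$, and $\f^*\d_{i,U}=\d_{i,U}+\d_{i,U\cup\{p\}}$ for every boundary class indexed by a set $U$ of retained points. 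The assertions for $\l$ and $\dirr$ are then immediate: both are pulled back from $\M_g$ along the map forgetting \emph{all} marked points, and this total forgetful map factors through $\pi_S$, so $\pi_S^*\l=\l$ and $\pi_S^*\dirr=\dirr$.

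For $\psi$, after identifying the marked points of $\M_{g,n_k}$ with the elements of $S$ via $\pi_S$, one has $\psi=\sum_{i\in S}\psi_i$, and since pullback is additive it suffices to track each $\psi_i$ separately. I would argue by induction on the number of forgotten points. The first step contributes the correction $-\d_{0,\{i,p_1\}}$; the decisive point is that at every later step the rule $\f^*\d_{0,U}=\d_{0,U}+\d_{0,U\cup\{p\}}$, applied to the \emph{accumulated} correction divisors, adjoins the newly forgotten point to each of them. Running the induction to the end, the total correction to $\psi_i$ is exactly the sum of the classes $\d_{0,T}$ with $i\in T$, $T\cap S=\{i\}$ and $|T|\ge 2$, each appearing once. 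Summing over $i\in S$, every $T$ with $|T\cap S|=1$ is then counted exactly once (by its unique element of $S$); since such a $T$ adjoins to $i$ an arbitrary nonempty subset of the $n-n_k$ forgotten points, its size $s=|T|$ ranges over $2\le s\le n-n_k+1$, and we obtain $\pi_S^*\psi=\sum_{i\in S}\psi_i-\sum_{s=2}^{n-n_k+1}\d_{0,s}^{S,1}$.

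For $\d_{i,s}$ the argument is the same iteration with no sign and no $\psi$-term. Starting from $\d_{i,U}$ with $U\subseteq S$, repeated use of $\f^*\d_{i,U}=\d_{i,U}+\d_{i,U\cup\{p\}}$ produces, once all forgotten points are dropped, the sum $\sum_{T:\,T\cap S=U}\d_{i,T}$ with every coefficient equal to one, $T$ running over all subsets obtained from $U$ by adjoining an arbitrary set of forgotten points. Summing over the $U\subseteq S$ with $|U|=s$ collects precisely the boundary divisors $\d_{i,T}$ with $|T\cap S|=s$, which regrouped according to the total size $|T|=s+\ell$ is $\sum_{\ell\ge 0}\d_{i,s+\ell}^{S,s}$, as claimed.

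The main obstacle is the bookkeeping in the $\psi$ computation: one must verify that iterating the one-step comparison does not merely reproduce the two-element sets $\{i,p\}$ but, through the doubling rule acting on the correction divisors themselves, generates \emph{every} set $T$ with $T\cap S=\{i\}$, and that none of these is produced with multiplicity greater than one. The cleanest way to organize this---and to make the order-independence manifest---is the induction above. One should also check that the one-step formulas genuinely apply at each stage, i.e. that the relevant boundary strata remain stable after each point is forgotten; for the $i=0$ corrections this is automatic, since $i\in T$ already forces $|T|\ge 2$ and forgetting points only enlarges $T$.
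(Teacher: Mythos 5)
Your argument is correct and complete: iterating the standard one-point forgetful-map comparison formulas $\f^*\psi_i=\psi_i-\d_{0,\{i,p\}}$ and $\f^*\d_{i,U}=\d_{i,U}+\d_{i,U\cup\{p\}}$ does produce each $\d_{0,T}$ with $T\cap S=\{i\}$ exactly once, and the regrouping by $|T|$ matches the stated ranges. The paper offers no proof of this proposition at all (it is asserted as standard, with the one-point formulas implicitly taken from \cite{acg}), so your write-up simply supplies, correctly, the derivation the paper leaves to the reader.
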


Furthermore, observe that the labels $i,j$ belong to different components $S_k,S_\ell$ if and only if the transposition $(i\ j)$ is not in $G$. This gives:  the divisor $L:= \sum_{1 \leq k \leq m} \pi^*_k W_k$ has the decomposition
\begin{equation}  \label{L}
L= - \sum_{1 \leq k \leq m} a(g,n_k) \l + \psi -2 \sum_{(i\ j)\notin G} \d_{0,\{i,j\}} + 0 \dirr - \sum_{1 \leq k \leq m} b(g,n_k) \sum_{i,j \in S_k} \d_{0,\{i,j\}} + h.t,
\end{equation}
where $h.t.$ denotes a (higher order) sum of boundary divisors, each multiplied
with coefficients $<-2$.
In addition we consider
\begin{equation}  \label{WD}
W= -a(g,n) \l + \psi - b(g,n) \sum_{i,j} \d_{0,\{i,j\}} + h.t, \qquad D=s\l -\dirr + h.t.,
\end{equation}
where $D=D_g$ is chosen with minimal slope $s=s(g)$ (see the list of divisors with small slope in \eqref{sl1}-\eqref{sl3}) and set 
\begin{equation} \label{e}
\epsilon := \min \{ b(g,n_k) -3| \quad k \in \{1, \ldots, m \}  \mbox{  with  } n_k \geq 2 \}.
\end{equation}

Clearly, $ \epsilon >0$ if and only if $\max \{n_1, \ldots n_m\} \leq g-2$. 
Combining equations \eqref{L},  \eqref{WD}, \eqref{e} (see also \ref{wgn}) one obtains the decomposition
\begin{equation} \label{decompos}
K_G  \geq 2 D + \frac{1}{1+\epsilon} L + \frac{2 \epsilon}{b(g,n)(1+\epsilon)}W + \eta \psi, \qquad \eta:= \frac{\epsilon}{1+\epsilon}(1-\dfrac{2}{b(g,n)})>0,
\end{equation}
provided one has the inequality
\begin{equation}  \label{fm}
f_m(g,n_1, \ldots,n_m):= 2 s(g) - \frac{1}{1+\epsilon} \sum_{1 \leq k \leq m} a(g,n_k) - \frac{2 \epsilon}{b(g,n) (1+ \epsilon)} a(g,n)  \leq 13.
\end{equation}

Since by Proposition \ref{psi} the divisor  class $\psi$ is big and nef and 
all divisors in equation \eqref{decompos} are effective and $S_n -$invariant, the proof boils down to checking the inequality \eqref{fm}. 

Note that for $\max \{n_1, \ldots, n_m\}\in \{g-1,g\}$ we get $\epsilon=\eta=0$, which proves that $K_G$ is at least effective and thus gives non-negative Kodaira dimension.

To treat the additional case $\max \{n_1, \ldots n_m \} = g-1$ in case (iii) we need more general divisors $L_1, \ldots ,L_m$. The function $f_m$ in equation \eqref{fm} will then depend on these divisors, destroying the explicit form of $f_m$ given  in equation \eqref{fm}. 

Instead of the family of (generalized) Weierstrass divisores $W_k$ on $\M_{g,n_k}$, for $1 \leq k \leq m,$ we use divisors $L_k$  on $\M_{g,n_k}$, for $1 \leq k \leq m,$ having a decomposition
\begin{equation}  \label{lk}
L_k=a_k \lambda+\psi-b_{k,irr} \dirr -b_k \delta_{0,2} + h.t.
\end{equation}
where $b_k >3$ and  the higher order terms $h.t.$ denote a linear combination of all other boundary divisors with coefficients $\leq -2$. Setting (analog to the above)  
$L:= \sum_{1 \leq k \leq m} \pi^*_k L_k$ we obtain
\begin{equation}  \label{Lnew}
L= - \sum_{1 \leq k \leq m} a_k \l + \psi - 2 \sum_{(i\ j)\notin G} \d_{0,\{i,j\}} -  
\sum_{1 \leq k \leq m} b_{k,irr} \dirr - \sum_{1 \leq k \leq m} b_k \sum_{i,j \in S_k} \d_{0,\{i,j\}} + h.t,
\end{equation}
with $h.t.$ as above. This is analog to \eqref{L}.

In this notation, we already have for shortness's sake suppressed dependence on $g,n$. Using the same convention in equation \eqref{WD} - thus simply writing $a,b$ in the decomposition of $W$ - and introducing
\begin{equation} \label{enew}
\epsilon := \min \{ b_k -3| \quad k \in \{1, \ldots, m \}  \mbox{  with  } n_k \geq 2 \},
\end{equation}
which is \eqref{e} with $b(g,n_k)$ replaced by $b_k$ and writing $\a_+:=\max\{\a,0\}$, we obtain the decomposition
\begin{equation} \label{decomposen}
K_G  \geq (2- \frac{1}{1+\epsilon}\sum_{k} b_{k,irr})_+  D + \frac{1}{1+\epsilon} L + \frac{2 \epsilon}{b(1+\epsilon)}W + \eta \psi, \quad \mbox{where} \quad \eta:= \frac{\epsilon}{1+\epsilon}(1-\frac{2}{b})>0,
\end{equation}
provided one has the inequality

\begin{equation}  \label{fml}
f_m(g,n_1, \ldots,n_m,L_1, \ldots ,L_m):= (2- \frac{1}{1+\epsilon}\sum_{k} b_{k,irr})_+ s + \frac{1}{1+\epsilon} \sum_{1 \leq k \leq m} a_k - \frac{2 \epsilon}{b (1- \epsilon)} a  \leq 13.
\end{equation}

This finishes the proof.

\section{Proof of Proposition \ref{difvar}}  \label{last}

In case $n \leq g-2$ and $n_{\mathrm{min}}(20)=5$ (instead of 4) and $n_{\mathrm{min}}(22)=6$ (instead of 5) this is a direct application of Theorem \ref{2}, case (ii).

To cover the remaining cases, we shall use Theorem \ref{2}  in case case (iii) with the following choice of divisors $L_k$ for $1 \leq k \leq 2$ (clearly $m=2$ for the difference variety) in equation \eqref{fml}.

In case $n=g-1$, we choose $L_1=L_2=T_g$, defined in equation \eqref{tg}. It is straightforward to check that $f_m \leq 13$ in this case.

In case $g=20$ and $n=4$, we choose $L_1=L_2=F_{20,8}.$ Again, since now all divisor classes are explicit, it is straightforward to check  $f_m \leq 13$. Finally, in case
 $g=22$ and $n=5$, we choose $L_1=L_2=\tilde{F}_{22,9}.$

\end{document}